\newtheorem{definition}{Definition}
\newtheorem{theorem}{Theorem}
\newtheorem{proposition}{Proposition}
\numberwithin{definition}{section} \numberwithin{theorem}{section}
\numberwithin{lemma}{section}\numberwithin{corollary}{section}
\numberwithin{equation}{section} \numberwithin{example}{section}
\numberwithin{proposition}{section} \numberwithin{remark}{section}
\begin{document}

\begin{center}

{\bf \Large Approximations of the Image and Integral Funnel of the \\ $L_p$ Ball under Urysohn Type Integral Operator}

\vspace{3mm}

Anar Huseyin$^1$, Nesir Huseyin$^2$, Khalik G. Guseinov$^3$

\vspace{3mm}
{\small $^1$Cumhuriyet University, Faculty of Science, Department of Statistics and Computer Sciences \\ 58140 Sivas, TURKEY

e-mail: ahuseyin@cumhuriyet.edu.tr
\vspace{2mm}

$^2$Cumhuriyet University, Faculty of Education, Department of Mathematics and Science Education \\ 58140 Sivas, TURKEY

e-mail: nhuseyin@cumhuriyet.edu.tr

\vspace{2mm}

$^3$Eskisehir Technical  University, Faculty of Science, Department of Mathematics \\ 26470 Eskisehir, TURKEY

e-mail: kguseynov@eskisehir.edu.tr}

\end{center}

\textbf{Abstract.} Approximations of the image and integral funnel of the closed ball of the space $L_p,$ $p>1,$ under Urysohn type integral operator are considered. The closed ball of the space $L_p,$ $p>1,$ is replaced by the set consisting of a finite number of piecewise-constant functions and it is proved that in the appropriate specifying of the discretization parameters, the images of defined piecewise-constant functions form an internal approximation of the image of the closed ball. Applying this result, the integral funnel of the closed ball of the space $L_p,$ $p>1,$ under Urysohn type integral operator is approximated by the set consisting of a finite number of points.

\vspace{5mm}

\textbf{Keywords.} Urysohn integral operator, image of $L_p$ ball, integral funnel, approximation, input-output system

\vspace{5mm}

\textbf{2010 Mathematics Subject Classification.} 45P05, 47H30, 65R10, 93B03, 93C35, 93D25

\section{Introduction}

Nonlinear integral operators arise in mathematical models of different physical, mechanical, economical, biological phenomena.  Note that the integral models have certain advantages over differential ones. For example, the outputs for such systems can be defined as continuous, even as $p$-integrable functions. In particular, the mathematical models of various input-output systems are based on the integral operator of the Urysohn type. Therefore, the construction of the set of images and integral funnel of the input functions under given integral operator is very important from the point of view of the application.

It should be noted that one of the important constructions of the theory of control systems described by ordinary differential equations is the notions of attainable set and integral funnel. Attainable set of the system is defined as the set of the points in the phase space to which the trajectories of the system at a given instant of time arrive. Integral funnel of the system is defined in the extended phase space as the set consisting of graphs of trajectories generated by all admissible control functions and is a generalization of the integral curve notion from the theory of differential equations (see, e.g., \cite{bla}, \cite{pan}). The attainable sets and integral funnel include the complete information about considered system  and often permits the construction of the trajectory with prescribed property. Different topological properties and approximate construction methods of the attainable sets and integral funnel of the given control system are the topics of a vast number of investigations. For a linear control system, the attainable sets can be described as the image of the set of control functions under appropriate Volterra, or Fredholm, or Hilbert-Schmidt integral operators.

In this paper internal approximations of the image and integral funnel of the closed ball of the space $L_p(\Omega;\mathbb{R}^m)$, $p>1,$ with radius $r$ and centered at the origin under Urysohn type integral operator are studied. The integral funnel is defined as the set of graphs of the images of all functions from $L_p$ ball. The closed ball is replaced by its subset consisting of a finite number of piecewise-constant functions. Using Steklov average of an integrable function and introducing $\Delta$-partition of a compact set, the image of the closed ball is approximated by the images of the specified finite number of piecewise-constant functions. The obtained result allows to approximate the integral funnel by the set consisting of a finite number of points.

The presented results can be applied for approximation of the set of outputs of the input-output system described by Urysohn type integral operator, where the inputs are chosen from the closed ball of the space $L_p(\Omega;\mathbb{R}^m)$, $p>1.$ Such inputs in general characterise the ones which are exhausted by consumption such as energy, fuel, finance, food, etc. (see, e.g., \cite{bel}, \cite{gusev}, \cite{kra}, \cite{sub}). Approximation of the set of outputs (or trajectories) and integral funnel of the control systems described different type integral operators and integral equations, where the input functions satisfy an integral constraint, are considered in  \cite{hus1}, \cite{hus2}, \cite{hus3}, \cite{hus4}, \cite{hus5}. In papers \cite{hus1}, \cite{hus2}, \cite{hus3}, \cite{hus4} the systems with scalar variable and continuous outputs are investigated, while in \cite{hus5}, the system with multivariable outputs is studied. Moreover, in papers \cite{hus1}, \cite{hus5}, it is assumed that the system is affine with respect to the input function, but in \cite{hus2}, \cite{hus3} \cite{hus4} it is supposed that the system is nonlinear with respect to both input and output functions. Note that in papers  \cite{hus2}, \cite{hus3}, \cite{hus4} only approximations of the sections of the set of trajectories and integral funnels are obtained. In the presented paper an approximation of the image of the closed $L_p$  ball in the space of continuous functions is given.

The paper is organized as follows. In Section 2 the basic conditions and pro\-po\-si\-tions which are used in the following arguments, are given. In Section 3, step by step way, the closed ball of the space $L_p$ is replaced by the set consisting of a finite number of piecewise-constant functions. It is proved that the set of images of defined finite number of piecewise-constant functions is an internal approximation of the image of the closed ball under considered integral operator (Theorem \ref{teo41}). An adequate approximation for integral funnel is also presented in the foregoing theorem.

\section{Preliminaries}

Consider Urysohn type integral operator
\begin{eqnarray}\label{oper}
U(x(\cdot))|(\xi)=\int_{\Omega}K(\xi,s,x(s))ds
\end{eqnarray}
where $\xi \in E,$ $s\in \Omega,$ $E \subset \mathbb{R}^{b}$, $\Omega \subset \mathbb{R}^{k}$ are compact sets, $x(\cdot)\in V_{p,r},$
\begin{eqnarray}\label{vepr}
V_{p,r}=\left\{x(\cdot)\in L_p\left(\Omega;\mathbb{R}^m\right):\left\|x(\cdot)\right\|_p \leq r\right\},
\end{eqnarray}
$p>1,$ $L_p\big(\Omega;\mathbb{R}^m\big)$ is the space of Lebesgue measurable functions $x(\cdot):\Omega\rightarrow \mathbb{R}^m$ such that
$\left\|x(\cdot)\right\|_p <+\infty,$ $\displaystyle \left\|x(\cdot)\right\|_p =\left(\int_{\Omega} \left\| x(s)\right\|^p ds\right)^{\frac{1}{p}},$
$\left\| \cdot \right\|$ denotes the Euclidean norm. It is assumed that the function $K(\cdot)$ satisfies the following conditions.

\vspace{3mm}

2.A. the function $K(\cdot):E \times \Omega \times \mathbb{R}^m \rightarrow \mathbb{R}^n$ is continuous;

\vspace{3mm}

2.B. there exists $l_0 >0$ such that
\begin{eqnarray*}
\left\|K(\xi,s,x_1)-K(\xi,s,x_2)\right\| \leq l_0 \left\|x_1-x_2\right\|
\end{eqnarray*} for every $(\xi,s,x_1)\in E \times \Omega \times \mathbb{R}^m$ and $(\xi,s,x_2)\in E \times \Omega \times \mathbb{R}^m;$

\vspace{3mm}

2.C. there exist functions $\omega(\cdot ,\cdot):\Omega \times  \mathbb{R}^m \rightarrow [0,+\infty)$, $\varphi(\cdot):[0,+\infty)  \rightarrow [0,+\infty)$ and the numbers $\beta_0 \geq 0,$ $\beta_1 \geq 0$ such that
\begin{eqnarray*}
\omega(s,x) \leq \beta_0\left\|x\right\| +\beta_1, \ \ \varphi(0)=0, \ \ \varphi(\tau) \rightarrow 0^+ \ \mbox{as} \ \tau \rightarrow 0^+
\end{eqnarray*}
for every $(s,x)\in \Omega \times \mathbb{R}^m$ and
\begin{eqnarray*}
\left\|K(\xi_1,s,x)-K(\xi_2,s,x)\right\| \leq \omega(s,x) \cdot \varphi \left(\left\|\xi_1-\xi_2\right\|\right)
\end{eqnarray*} for every $(\xi_1,s,x)\in E \times \Omega \times \mathbb{R}^m$ and $(\xi_2,s,x)\in E \times \Omega \times \mathbb{R}^m$.

\vspace{3mm}

Denote
\begin{eqnarray}\label{eq1}
\mathcal{U}_{p,r}=\left\{U(x(\cdot))|(\cdot): x(\cdot)\in V_{p,r}\right\},
\end{eqnarray}
\begin{eqnarray}\label{eq2}
\mathcal{U}_{p,r}(\xi)=\left\{y(\xi)\in \mathbb{R}^n: y(\cdot)\in \mathcal{U}_{p,r}\right\}, \ \ \xi \in E,
\end{eqnarray}
\begin{eqnarray}\label{eq3}
\mathcal{F}_{p,r}=\left\{(\xi,y(\xi))\in E \times \mathbb{R}^n: y(\cdot)\in \mathcal{U}_{p,r}\right\}.
\end{eqnarray}

It is obvious that the set $\mathcal{U}_{p,r}$ is the image of the closed ball $V_{p,r}$ under Urysohn integral operator (\ref{oper}), the set $\mathcal{F}_{p,r}$ consists of graphs of the functions from $\mathcal{U}_{p,r}.$ The set $\mathcal{F}_{p,r}$ is called integral funnel of the set $V_{p,r}$ under Urysohn integral operator (\ref{oper}).

The conditions 2.A and 2.B  imply that for each   $x(\cdot)\in V_{p,r}$ its image $U(x(\cdot))|(\cdot)$ is continuous function and the set $\mathcal{U}_{p,r}$ is a bounded subset of the space $C\left(E; \mathbb{R}^n\right)$ where $C\left(E; \mathbb{R}^n\right)$ is the space  of continuous functions $w(\cdot):E \rightarrow \mathbb{R}^n$ with norm $\left\|w(\cdot)\right\|_C=\max \left\{\left\|w(\xi)\right\|: \xi \in E\right\}.$ We set
\begin{eqnarray}\label{bc1}
B_C(1)=\left\{y(\cdot)\in C(E; \mathbb{R}^n): \left\|y(\cdot)\right\|_C\leq 1\right\},
\end{eqnarray}
\begin{eqnarray}\label{alpha*}
\alpha_*= M_0\mu(\Omega)+l_0r[\mu(\Omega)]^{\frac{p-1}{p}},
\end{eqnarray}
\begin{eqnarray}\label{betaa}
\beta_* =\beta_1 \mu(\Omega)+\beta_0 r\left[\mu(\Omega)\right]^{\frac{p-1}{p}}
\end{eqnarray} where $\mu(\Omega)$ denotes the Lebesgue measure of the set $\Omega,$ $l_0,$ $\beta_0$ and $\beta_1$ are defined in conditions 2.B and 2.C, $M_0= \max \left\{\left\|K(\xi,s,0)\right\|: (\xi,s) \in E \times \Omega \right\}.$

The Hausdorff distance between the sets $G\subset \mathbb{R}^n$, $Q\subset \mathbb{R}^n$ is denoted by symbol $H_n(G,Q)$ and the Hausdorff distance between the sets $W\subset C\left( E; \mathbb{R}^{n}\right)$, $Y\subset C\left( E; \mathbb{R}^{n}\right)$ is denoted by symbol $H_C(W,Y)$ (see, e.g., \cite{bla}, \cite{fil}). The conditions 2.A-2.C imply the validity of the following propositions.
\begin{proposition}\label{prop10} The inequality
\begin{eqnarray*}
\left\|y(\cdot)\right\|_C \leq \alpha_*
\end{eqnarray*} holds for every $y(\cdot)\in \mathcal{U}_{p,r}$ where $\alpha_*$ is defined by (\ref{alpha*}).
\end{proposition}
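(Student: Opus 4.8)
The plan is to bound $\|y(\cdot)\|_C = \max_{\xi\in E}\|y(\xi)\|$ directly from the integral representation of $y(\cdot)$, splitting the kernel $K(\xi,s,x(s))$ into the part $K(\xi,s,0)$ and the increment $K(\xi,s,x(s))-K(\xi,s,0)$. First I would fix $x(\cdot)\in V_{p,r}$ and $\xi\in E$, write
\begin{eqnarray*}
\|y(\xi)\| = \left\|\int_\Omega K(\xi,s,x(s))\,ds\right\| \leq \int_\Omega \|K(\xi,s,0)\|\,ds + \int_\Omega \|K(\xi,s,x(s))-K(\xi,s,0)\|\,ds.
\end{eqnarray*}
The first integrand is bounded by $M_0 = \max\{\|K(\xi,s,0)\|:(\xi,s)\in E\times\Omega\}$, which exists by condition 2.A since $E\times\Omega$ is compact; hence the first term is at most $M_0\mu(\Omega)$.

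For the second term I would invoke the Lipschitz condition 2.B with $x_1 = x(s)$, $x_2 = 0$, giving $\|K(\xi,s,x(s))-K(\xi,s,0)\| \leq l_0\|x(s)\|$. Then apply Hölder's inequality with exponents $p$ and $p/(p-1)$:
\begin{eqnarray*}
\int_\Omega l_0\|x(s)\|\,ds \leq l_0\left(\int_\Omega \|x(s)\|^p\,ds\right)^{\frac1p}\left(\int_\Omega 1^{\frac{p}{p-1}}\,ds\right)^{\frac{p-1}{p}} = l_0\,\|x(\cdot)\|_p\,[\mu(\Omega)]^{\frac{p-1}{p}} \leq l_0 r\,[\mu(\Omega)]^{\frac{p-1}{p}},
\end{eqnarray*}
using $\|x(\cdot)\|_p \leq r$ from the definition of $V_{p,r}$. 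Adding the two estimates gives $\|y(\xi)\| \leq M_0\mu(\Omega) + l_0 r[\mu(\Omega)]^{\frac{p-1}{p}} = \alpha_*$. Since the bound is independent of $\xi$, taking the maximum over $\xi\in E$ yields $\|y(\cdot)\|_C \leq \alpha_*$, and since $x(\cdot)\in V_{p,r}$ was arbitrary, the inequality holds for every $y(\cdot)\in\mathcal{U}_{p,r}$.

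This argument is essentially routine; there is no real obstacle. The only point requiring a little care is the justification for pulling the norm inside the integral and applying Hölder — both standard facts for Bochner/Lebesgue integrals of vector-valued functions — together with the observation that $p>1$ is exactly what makes the conjugate exponent $p/(p-1)$ finite so that $[\mu(\Omega)]^{(p-1)/p}$ is the correct factor. Condition 2.C is not needed here; it will enter only in the companion proposition bounding the modulus of continuity of $y(\cdot)$.
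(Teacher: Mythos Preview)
Your proof is correct. The paper states Proposition~2.1 without proof, merely noting that it follows from conditions 2.A--2.C; the decomposition you give---splitting $K(\xi,s,x(s))$ into $K(\xi,s,0)$ and the Lipschitz increment, then applying H\"{o}lder---is exactly the argument the explicit form of $\alpha_*$ in (\ref{alpha*}) is designed to record, so your approach is the intended one.
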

\begin{proposition}\label{prop11} For every $y(\cdot)\in \mathcal{U}_{p,r}$ and $\xi_1 \in E$ and $\xi_2 \in E$ the inequality
\begin{eqnarray*}
\left\|y(\xi_1) -y(\xi_2)\right\| \leq \beta_* \cdot \varphi \left( \left\| \xi_1-\xi_2\right\|\right)
\end{eqnarray*} is held, and hence
\begin{eqnarray*}
H_n \left( \mathcal{U}_{p,r}(\xi_1),\mathcal{U}_{p,r}(\xi_2)\right) \leq \beta_* \cdot \varphi \left( \left\| \xi_1-\xi_2\right\|\right)
\end{eqnarray*} is verified for every $\xi_1 \in E$ and $\xi_2 \in E$, where $\beta_*$ is defined by (\ref{betaa}).
\end{proposition}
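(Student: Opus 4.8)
The plan is to start from the integral representation of the operator. Given $y(\cdot)\in\mathcal{U}_{p,r}$, I would pick $x(\cdot)\in V_{p,r}$ with $y(\xi)=\int_{\Omega}K(\xi,s,x(s))\,ds$ for every $\xi\in E$. Fixing $\xi_1,\xi_2\in E$, I subtract the two integral expressions, move the Euclidean norm inside the integral, and apply condition 2.C pointwise in $s$ to obtain
$$\left\|y(\xi_1)-y(\xi_2)\right\|\le\varphi\left(\left\|\xi_1-\xi_2\right\|\right)\int_{\Omega}\omega(s,x(s))\,ds.$$

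The next step is to bound $\int_{\Omega}\omega(s,x(s))\,ds$. Using the growth estimate $\omega(s,x)\le\beta_0\left\|x\right\|+\beta_1$ from 2.C gives $\int_{\Omega}\omega(s,x(s))\,ds\le\beta_0\int_{\Omega}\left\|x(s)\right\|\,ds+\beta_1\mu(\Omega)$. The one genuinely quantitative step is estimating $\int_{\Omega}\left\|x(s)\right\|\,ds$: here I apply the Hölder inequality with exponents $p$ and $p/(p-1)$ to the functions $s\mapsto\left\|x(s)\right\|$ and $s\mapsto 1$, which yields $\int_{\Omega}\left\|x(s)\right\|\,ds\le\left\|x(\cdot)\right\|_p\,[\mu(\Omega)]^{\frac{p-1}{p}}\le r\,[\mu(\Omega)]^{\frac{p-1}{p}}$, the last inequality because $x(\cdot)\in V_{p,r}$. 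Combining, $\int_{\Omega}\omega(s,x(s))\,ds\le\beta_1\mu(\Omega)+\beta_0 r[\mu(\Omega)]^{\frac{p-1}{p}}=\beta_*$, and the claimed pointwise inequality $\left\|y(\xi_1)-y(\xi_2)\right\|\le\beta_*\varphi\left(\left\|\xi_1-\xi_2\right\|\right)$ follows.

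For the Hausdorff-distance assertion I would argue directly from the definition: every element of $\mathcal{U}_{p,r}(\xi_1)$ is of the form $y(\xi_1)$ for some $y(\cdot)\in\mathcal{U}_{p,r}$, and the corresponding value $y(\xi_2)$ belongs to $\mathcal{U}_{p,r}(\xi_2)$ and satisfies $\left\|y(\xi_1)-y(\xi_2)\right\|\le\beta_*\varphi\left(\left\|\xi_1-\xi_2\right\|\right)$ by the first part; interchanging the roles of $\xi_1$ and $\xi_2$ gives the same bound in the opposite direction, hence $H_n\left(\mathcal{U}_{p,r}(\xi_1),\mathcal{U}_{p,r}(\xi_2)\right)\le\beta_*\varphi\left(\left\|\xi_1-\xi_2\right\|\right)$.

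I do not anticipate a real obstacle: the argument is a short chain of estimates. The only points requiring care are the exponent bookkeeping in the Hölder step, so that the conjugate exponent produces exactly the power $\frac{p-1}{p}$ appearing in the definition of $\beta_*$, and checking that $s\mapsto K(\xi,s,x(s))$ and $s\mapsto\omega(s,x(s))$ are measurable so that the integrals and the interchange of norm and integral are legitimate; the latter follows from the continuity hypothesis 2.A together with the measurability of $x(\cdot)$.
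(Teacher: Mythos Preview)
Your argument is correct and is exactly the computation the authors have in mind: the paper states Proposition~\ref{prop11} without proof, simply remarking that conditions 2.A--2.C imply it, and your chain of estimates (apply 2.C pointwise, use the linear growth bound on $\omega$, then H\"older with exponents $p$ and $p/(p-1)$ to recover the factor $[\mu(\Omega)]^{\frac{p-1}{p}}$) is the straightforward verification of that claim. The Hausdorff-distance part is likewise the obvious consequence, just as you wrote it.
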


Proposition \ref{prop11} implies that for each fixed $\xi_* \in E$ the convergence
\begin{eqnarray*}\label{beta*}
H_n \left( \mathcal{U}_{p,r}(\xi),\mathcal{U}_{p,r}(\xi_*)\right) \rightarrow 0  \ \  \mbox{as}  \ \ \xi \rightarrow \xi_*
\end{eqnarray*} is satisfied.

Now, from Arzela-Ascoli theorem, Propositions \ref{prop10} and \ref{prop11} we obtain pre\-com\-pact\-ness of the set $\mathcal{U}_{p,r}.$

\begin{proposition}\label{prop12}
The set $\mathcal{U}_{p,r}$ is a precompact subset of the space $C\left(E; \mathbb{R}^n\right).$
\end{proposition}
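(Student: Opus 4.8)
The plan is to invoke the Arzelà–Ascoli theorem, so the whole task reduces to checking that $\mathcal{U}_{p,r}$, viewed as a family of functions on the compact set $E$, is uniformly bounded and equicontinuous.

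First, uniform boundedness is immediate from Proposition \ref{prop10}: every $y(\cdot)\in\mathcal{U}_{p,r}$ satisfies $\left\|y(\cdot)\right\|_C\leq\alpha_*$, so the entire family is contained in $\alpha_* B_C(1)$.

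Next, for equicontinuity I would use Proposition \ref{prop11}, which supplies the modulus–of–continuity bound $\left\|y(\xi_1)-y(\xi_2)\right\|\leq\beta_*\cdot\varphi\left(\left\|\xi_1-\xi_2\right\|\right)$ valid simultaneously for all $y(\cdot)\in\mathcal{U}_{p,r}$. Fix $\varepsilon>0$. By condition 2.C one has $\varphi(\tau)\rightarrow 0^+$ as $\tau\rightarrow 0^+$, hence there is $\delta>0$ with $\varphi(\tau)<\varepsilon/(\beta_*+1)$ for all $\tau\in[0,\delta)$; then $\left\|\xi_1-\xi_2\right\|<\delta$ forces $\left\|y(\xi_1)-y(\xi_2)\right\|<\varepsilon$ for every $y(\cdot)$ in the family. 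Since $\delta$ is independent of $y(\cdot)$, this is precisely equicontinuity (indeed uniform equicontinuity, which on the compact set $E$ amounts to the same thing). Having both properties, Arzelà–Ascoli yields that $\mathcal{U}_{p,r}$ is a precompact subset of $C\left(E;\mathbb{R}^n\right)$.

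There is no genuine obstacle at this stage: the real work has been front-loaded into Propositions \ref{prop10} and \ref{prop11}, whose proofs rest on the growth and Lipschitz/continuity hypotheses 2.A--2.C together with Hölder's inequality (this is the source of the exponent $\tfrac{p-1}{p}$ appearing in $\alpha_*$ and $\beta_*$). The only minor point to keep in mind is the degenerate case $\beta_*=0$, in which every $y(\cdot)$ is constant and equicontinuity is trivial; writing $\beta_*+1$ in the denominator above absorbs this case without a separate argument.
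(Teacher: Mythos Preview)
Your proposal is correct and follows exactly the approach the paper indicates: the paper simply states that precompactness follows from the Arzel\`a--Ascoli theorem together with Propositions~\ref{prop10} and~\ref{prop11}, which is precisely what you have spelled out. Your added $\varepsilon$--$\delta$ verification of equicontinuity and the handling of the degenerate case $\beta_*=0$ are careful details that the paper leaves implicit.
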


Now let us give definition of finite $\Delta$-partition of the set $Q\subset \mathbb{R}^{n_*}.$
\begin{definition} \label{def11} Let $Q\subset \mathbb{R}^{n_*}$ be a given set. The finite system of sets
$\Upsilon =\left\{Q_1, Q_2, \ldots, Q_T\right\}$ is said to be a finite $\Delta$-partition of given $Q$ if

\vspace{2mm}

$\mathbf{2.d_1.}$ $Q_i \subset Q$ and $Q_i$ is Lebesgue measurable for every $i=1,2,\ldots ,T$;

$\mathbf{2.d_2.}$ $Q_i\bigcap Q_j =\emptyset$ for every $i\neq j$, where $i=1,2,\ldots, T$ and $j=1,2,\ldots, T$;

$\mathbf{2.d_3.}$ $Q =\bigcup_{i=1}^{T} Q_i$;

$\mathbf{2.d_4.}$ $diam \, (Q_i) \leq \Delta$ for every $i=1,2,\ldots, T,$ where $diam \, (Q_i)=\sup\big\{\left\|x-y\right\|:$ $x\in Q_i, \ y\in Q_i \big\}$
\end{definition}

Since $\mu(Q_i)\rightarrow 0^+$ as $diam \, (Q_i)\rightarrow 0^+,$ then without loss of generality it will be assumed that for partition  $\Upsilon =\left\{Q_1, Q_2, \ldots, Q_T\right\}$  the inequality $\mu(Q_i)\leq \Delta$ is also satisfied for every $i=1,2,\ldots, T.$

\begin{proposition}\label{prop12}
Let $Q\subset \mathbb{R}^{n_*}$ be a compact set. Then for every $\Delta>0$ it has a finite $\Delta$-partition.
\end{proposition}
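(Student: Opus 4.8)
The plan is to construct the required $\Delta$-partition explicitly by slicing a bounding box of $Q$ into small cubes and intersecting them with $Q$. First I would use compactness of $Q\subset\mathbb{R}^{n_*}$ to enclose it in a closed axis-parallel box $P=[a_1,b_1]\times\cdots\times[a_{n_*},b_{n_*}]$; such a box exists because a compact set is bounded. Next, choosing an integer $N$ large enough that $\frac{1}{N}\max_{1\le j\le n_*}(b_j-a_j)\cdot\sqrt{n_*}\le\Delta$, I would partition each edge $[a_j,b_j]$ into $N$ equal half-open subintervals (taking the last one closed so that the endpoint $b_j$ is covered), thereby expressing $P$ as a disjoint union of $N^{n_*}$ boxes $P_1,\dots,P_{N^{n_*}}$, each of which has diameter at most $\frac{\sqrt{n_*}}{N}\max_j(b_j-a_j)\le\Delta$.

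Then I would set $\widetilde{Q}_i=Q\cap P_i$ for $i=1,\dots,N^{n_*}$, discard those indices for which $\widetilde{Q}_i=\emptyset$, and relabel the nonempty ones as $Q_1,\dots,Q_T$. I would then verify the four conditions of Definition \ref{def11}: condition $\mathbf{2.d_1}$ holds because each $Q_i\subset Q$ and $Q_i$ is the intersection of the closed (hence Borel, hence Lebesgue measurable) set $Q$ with a box, so it is Lebesgue measurable; condition $\mathbf{2.d_2}$ holds because the boxes $P_i$ are pairwise disjoint, hence so are their intersections with $Q$; condition $\mathbf{2.d_3}$ holds because $\bigcup_i P_i=P\supset Q$, so $\bigcup_i(Q\cap P_i)=Q$, and dropping the empty intersections does not change the union; condition $\mathbf{2.d_4}$ holds since $\operatorname{diam}(Q_i)\le\operatorname{diam}(P_i)\le\Delta$ by the choice of $N$.

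I do not expect any genuine obstacle here; the statement is elementary and the only points requiring a word of care are the measurability of $Q\cap P_i$ (handled by noting a compact set is Borel) and the bookkeeping needed to make the cubes genuinely disjoint while still covering the closed box $P$ (handled by the half-open/last-closed convention on each coordinate interval). The remark immediately preceding the proposition — that $\mu(Q_i)\le\Delta$ may additionally be assumed after possibly shrinking $\Delta$ or refining further — is then automatic since $\mu(Q_i)\le\mu(P_i)\to 0$ as $N\to\infty$.
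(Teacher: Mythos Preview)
Your argument is correct and complete: the box-slicing construction yields a finite $\Delta$-partition satisfying all four conditions of Definition~\ref{def11}, and your handling of disjointness (half-open cubes) and measurability (intersection of a compact set with a box) is sound. The paper itself states this proposition without proof, treating it as an elementary fact, so there is nothing to compare against; your construction is the standard one and would serve perfectly well as the omitted justification.
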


The next proposition will be used in the following arguments.
\begin{proposition}\label{prop13}
Let $(X,d)$ be a metric space, $P \subset X$ be a precompact set, $P_i \subset P_{i+1} \subset P$ for every $i=1,2,\ldots$ and
\begin{equation*}
H_X\left(P,\bigcup_{i=1}^{\infty}P_i\right)\leq \theta_*.
\end{equation*} Then for every $\varepsilon >0$ there exists $i(\varepsilon)>0$ such that for each $i\geq i(\varepsilon)$ the inequality
\begin{equation*}
H_X(P,P_i)\leq \theta_*+\varepsilon
\end{equation*} is satisfied where $H_X(\cdot,\cdot)$ stands for Hausdorff distance between the subsets of the metric space $(X,d).$
\end{proposition}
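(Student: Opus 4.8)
The plan is to exploit two features of the hypotheses: the monotonicity of the sequence $\{P_i\}$ and the total boundedness of $P$ coming from precompactness. First I would reduce the two-sided Hausdorff distances to one-sided deviations. Since $P_i\subset P$ and $\bigcup_{i=1}^{\infty}P_i\subset P$, the deviation of $P_i$ from $P$ and of $\bigcup_{i=1}^{\infty}P_i$ from $P$ both vanish, so
\begin{equation*}
H_X(P,P_i)=\sup_{p\in P}\,d(p,P_i),\qquad H_X\!\left(P,\bigcup_{i=1}^{\infty}P_i\right)=\sup_{p\in P}\,d\!\left(p,\bigcup_{i=1}^{\infty}P_i\right),
\end{equation*}
where $d(p,A)=\inf_{q\in A}d(p,q)$. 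Because $P_i\subset P_{i+1}$, the quantity $d(p,P_i)$ is non-increasing in $i$ for each fixed $p$, hence so is $H_X(P,P_i)$; therefore it is enough to produce a single index $i(\varepsilon)$ with $H_X(P,P_{i(\varepsilon)})\le\theta_*+\varepsilon$, and the asserted inequality for all $i\ge i(\varepsilon)$ follows automatically.

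Next, fix $\varepsilon>0$. Using precompactness of $P$, I would choose a finite $\tfrac{\varepsilon}{2}$-net $\{p_1,\dots,p_N\}\subset P$, so that every $p\in P$ satisfies $d(p,p_j)<\tfrac{\varepsilon}{2}$ for some $j$. The assumption $H_X\big(P,\bigcup_{i=1}^{\infty}P_i\big)\le\theta_*$ gives $d\big(p_j,\bigcup_{i=1}^{\infty}P_i\big)\le\theta_*$ for each $j$; since this is an infimum over $\bigcup_{i=1}^{\infty}P_i$, for every $j$ I can pick an index $i_j$ and a point $q_j\in P_{i_j}$ with $d(p_j,q_j)<\theta_*+\tfrac{\varepsilon}{2}$. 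Set $i(\varepsilon)=\max\{i_1,\dots,i_N\}$. By nestedness, for every $i\ge i(\varepsilon)$ we have $q_j\in P_{i_j}\subset P_i$ for all $j=1,\dots,N$.

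Finally, for $i\ge i(\varepsilon)$ and any $p\in P$, choosing $j$ with $d(p,p_j)<\tfrac{\varepsilon}{2}$ and applying the triangle inequality,
\begin{equation*}
d(p,P_i)\le d(p,q_j)\le d(p,p_j)+d(p_j,q_j)<\tfrac{\varepsilon}{2}+\theta_*+\tfrac{\varepsilon}{2}=\theta_*+\varepsilon .
\end{equation*}
Taking the supremum over $p\in P$ and recalling the first step yields $H_X(P,P_i)\le\theta_*+\varepsilon$, as required. I do not expect a genuine obstacle here; the only points needing a little care are that precompactness (rather than compactness) is exactly what guarantees a \emph{finite} net, and that the value $\theta_*$ is in general not attained, so one must pass to the near-minimizers $q_j$, which causes no difficulty once the $\tfrac{\varepsilon}{2}$ slack has been reserved in advance.
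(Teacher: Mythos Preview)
Your argument is correct: the reduction to the one-sided deviation via $P_i\subset P$, the monotonicity observation, the use of a finite $\tfrac{\varepsilon}{2}$-net in $P$ from total boundedness, and the triangle-inequality estimate all go through exactly as you outline. Note, however, that the paper does not supply a proof of this proposition; it is stated among the preliminaries as a standard auxiliary fact and invoked later (in Step~3 of the proof of Theorem~3.1) without argument. So there is no paper proof to compare against, and your write-up fills that gap cleanly.
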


\vspace{3mm}

\section{Approximation}

Let $\gamma >0,$ $\Lambda =\left\{0=w_0<w_1<\ldots <w_q =\gamma\right\}$ be a uniform partition of the closed interval $\left[0,\gamma\right]$ and $\delta =w_{\lambda+1}-w_{\lambda},$ $\lambda= 0,1,\ldots, q-1$. Since $\Omega \subset \mathbb{R}^{k}$ and $E \subset \mathbb{R}^{b}$   are compact sets, then according to  Proposition \ref{prop12} for every $\Delta>0$ they have finite $\Delta$-partitions $\Upsilon_1 =\left\{\Omega_1, \Omega_2, \ldots, \Omega_M\right\}$ and $\Upsilon_2 =\left\{E_1, E_2, \ldots, E_N\right\}$ respectively.

Let $S=\left\{x\in \mathbb{R}^m: \left\|x\right\|=1\right\}$ and for given $\sigma >0$ the set $S_{\sigma}=\left\{b_1,b_2,\ldots, b_g\right\}$ be a finite $\sigma$-net on $S.$ An algorithm for specifying a finite $\sigma$-net on $S$ is given in \cite{gus2}.
We set
\begin{eqnarray}\label{eqq1}
V_{p,r}^{\gamma, \Upsilon_1, \Lambda, \sigma}&=& \big\{ x(\cdot)\in V_{p,r} : x(s)=w_{\lambda_j} b_{i_j} \ \mbox{for every} \ s \in \Omega_j, \nonumber \\
& \ & j=1,2,\ldots, M, \ w_{\lambda_j}\in \Lambda, \ b_{i_j}\in S_{\sigma} \big\},
\end{eqnarray} and let
\begin{eqnarray}\label{eqq2}
\mathcal{U}_{p,r}^{\gamma,\Upsilon_1,\Lambda, \sigma}=\left\{ U(x(\cdot))|(\cdot): x(\cdot)\in V_{p,r}^{\gamma, \Upsilon_1,\Lambda, \sigma}\right\},
\end{eqnarray}
\begin{eqnarray}\label{eqq3}
\mathcal{U}_{p,r}^{\gamma,\Upsilon_1,\Lambda, \sigma}(\xi)=\left\{y(\xi): y(\cdot)\in \mathcal{U}_{p,r}^{\gamma,\Upsilon_1,\Lambda, \sigma}\right\}, \ \xi \in E.
\end{eqnarray}

Now for each $i=1,2,\ldots ,N$ we choose an arbitrary $\xi_i \in E_i$ and denote
\begin{eqnarray}\label{eqq4}
\mathcal{F}_{p,r}^{\gamma,\Upsilon_1,\Lambda, \sigma,\Upsilon_2}=\bigcup_{i=1}^{N}\left( \xi_i, \mathcal{U}_{p,r}^{\gamma,\Upsilon_1,\Lambda, \sigma}(\xi_i)\right).
\end{eqnarray}

Note that the set $V_{p,r}^{\gamma, \Upsilon_1, \Lambda, \sigma}$ defined by (\ref{eqq1}) can be redefined as
\begin{eqnarray*}
V_{p,r}^{\gamma, \Upsilon_1, \Lambda, \sigma}&=& \big\{ x(\cdot)\in L_{p}(\Omega; \mathbb{R}^m) : x(s)=w_{\lambda_j} b_{i_j} \ \mbox{for every} \ s \in \Omega_j, \nonumber \\
& \ & j=1,2,\ldots, M, \ w_{\lambda_j}\in \Lambda, \ b_{i_j}\in S_{\sigma}, \ \sum_{j=1}^{M} \mu(\Omega_j) w_{\lambda_j}^p \leq r^p\big\}.
\end{eqnarray*}

It is obvious that the set $V_{p,r}^{\gamma, \Upsilon_1, \Lambda, \sigma}$ consists of a finite number of piecewise-constant functions, the set $\mathcal{U}_{p,r}^{\gamma, \Upsilon, \Lambda, \sigma}$ consists of a finite number of continuous functions which are the images of the functions  from the set $V_{p,r}^{\gamma, \Upsilon_1, \Lambda, \sigma}$ under operator (\ref{oper}), the set  $\mathcal{F}_{p,r}^{\gamma,\Upsilon_1,\Lambda,\sigma,\Upsilon_2}\subset \mathbb{R}^{k+n}$ is the finite union of the sets consisting of a finite number of points.

\begin{theorem}\label{teo41} For every $\varepsilon >0$ there exists $\gamma_*(\varepsilon)>0$, $\Delta_*(\varepsilon)>0$, $\delta_*(\varepsilon)>0$, $\sigma_*(\varepsilon)=\sigma_*(\varepsilon,\gamma_*(\varepsilon))>0$ such that for each $\Delta \in \left(0,\Delta_*(\varepsilon)\right],$ $\delta \in \left(0,\delta_*(\varepsilon)\right]$ and $\sigma \in (0,\sigma_*(\varepsilon)]$ the inequalities
\begin{eqnarray}\label{os1}
H_{C}\left(\mathcal{U}_{p,r}, \mathcal{U}_{p,r}^{\gamma_*(\varepsilon),\Upsilon_1, \Lambda,\sigma}\right) < \varepsilon \, ,
\end{eqnarray}
\begin{eqnarray}\label{os2}
H_{k+n}\left(\mathcal{F}_{p,r}, \mathcal{F}_{p,r}^{\gamma_*(\varepsilon),\Upsilon_1, \Lambda,\sigma, \Upsilon_2}\right) < \varepsilon
\end{eqnarray} are satisfied where the set $\mathcal{F}_{p,r}$ is defined by (\ref{eq3}), the set $\mathcal{F}_{p,r}^{\gamma_*(\varepsilon), \Upsilon_1, \Lambda,\sigma, \Upsilon_2}$ is defined by (\ref{eqq4}), $\Upsilon_1$ is a finite $\Delta$-partition of the compact set $\Omega,$ $\Upsilon_2$ is a finite $\Delta$-partition of the compact set $E,$ $\Lambda$ is a uniform partition of the closed interval  $[0,\gamma_*(\varepsilon)],$ $\delta$ is its diameter.
\end{theorem}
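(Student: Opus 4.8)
The plan is to prove the uniform $C$-approximation \eqref{os1} first and then to deduce \eqref{os2} from it together with Proposition \ref{prop11}. Because $V_{p,r}^{\gamma,\Upsilon_1,\Lambda,\sigma}\subset V_{p,r}$ we have $\mathcal{U}_{p,r}^{\gamma,\Upsilon_1,\Lambda,\sigma}\subset\mathcal{U}_{p,r}$, so one of the two Hausdorff directions is automatically zero and only the following has to be shown: every $y(\cdot)\in\mathcal{U}_{p,r}$ can be approximated within $\varepsilon$ in the norm of $C(E;\mathbb{R}^n)$ by the image of a piecewise-constant function from $V_{p,r}^{\gamma_*,\Upsilon_1,\Lambda,\sigma}$. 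The workhorse is the estimate
\begin{equation*}
\left\|U(x_1(\cdot))|(\cdot)-U(x_2(\cdot))|(\cdot)\right\|_C\leq l_0\,[\mu(\Omega)]^{\frac{p-1}{p}}\left\|x_1(\cdot)-x_2(\cdot)\right\|_p ,
\end{equation*}
which follows from condition 2.B and the H\"older inequality and shows that the operator \eqref{oper} is Lipschitz continuous from $L_p(\Omega;\mathbb{R}^m)$ into $C(E;\mathbb{R}^n)$; hence it is enough to approximate, in the $L_p$ norm, every $x(\cdot)\in V_{p,r}$ by an element of $V_{p,r}^{\gamma_*,\Upsilon_1,\Lambda,\sigma}$.

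The first step is a pointwise approximation lemma: for each $x(\cdot)\in V_{p,r}$ and each $\eta>0$ there are $\gamma_0=\gamma_0(x(\cdot),\eta)>0$ and $\Delta_0,\delta_0,\sigma_0>0$ such that for every finite $\Delta$-partition $\Upsilon_1$ of $\Omega$ with $\Delta\le\Delta_0$, every uniform partition $\Lambda$ of $[0,\gamma_0]$ of diameter $\le\delta_0$ and every $\sigma$-net $S_\sigma$ on $S$ with $\sigma\le\sigma_0$, there is $x'(\cdot)\in V_{p,r}^{\gamma_0,\Upsilon_1,\Lambda,\sigma}$ with $\|x(\cdot)-x'(\cdot)\|_p<\eta$. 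I would build $x'(\cdot)$ in four error-controlled steps: (i) contract, $x(\cdot)\mapsto(1-\tau)x(\cdot)$, which costs at most $\tau r$ in $L_p$ but creates the strict slack $(1-(1-\tau)^p)r^p$ in the integral constraint; (ii) replace $(1-\tau)x(\cdot)$ by its Steklov average, a continuous function that does not increase the $L_p$ norm, is bounded by a constant $\gamma_0$ depending only on the averaging parameter and $r$, and converges to it in $L_p$ as the averaging parameter tends to $0$; (iii) freeze this continuous function at an arbitrary point of each cell $\Omega_j$ of $\Upsilon_1$, which by uniform continuity is $L_p$-close to it once $\Delta$ is small and, again for $\Delta$ small, keeps the Riemann sum $\sum_j\mu(\Omega_j)\|\text{value}_j\|^p$ below $r^p$ thanks to the slack from (i); (iv) round each frozen value $v_j$ to $w_{\lambda_j}b_{i_j}$ with $w_{\lambda_j}\in\Lambda$ the largest grid point not exceeding $\|v_j\|$ (so the constraint $\sum_j\mu(\Omega_j)w_{\lambda_j}^p\le r^p$ survives) and $b_{i_j}\in S_\sigma$ with $\|b_{i_j}-v_j/\|v_j\|\|\le\sigma$; the per-cell error is then at most $\delta+\gamma_0\sigma$ and contributes at most $[\mu(\Omega)]^{1/p}(\delta+\gamma_0\sigma)$ to the $L_p$ error. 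Choosing $\tau$, then the averaging parameter, then $\Delta_0,\delta_0,\sigma_0$ in this order makes the four errors sum to less than $\eta$; note that $\gamma_0$ depends on $x(\cdot)$ only through the averaging parameter and that the term $\gamma_0\sigma$ forces $\sigma_0$ to be of order $\eta/\gamma_0$, which is exactly the source of the dependence $\sigma_*=\sigma_*(\varepsilon,\gamma_*(\varepsilon))$ in the statement.

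The passage from this pointwise lemma to the uniform estimate \eqref{os1} is the crux, and it is where precompactness of $\mathcal{U}_{p,r}$ (established in Section 2) is essential: a piecewise-constant $L_p$-approximation of $V_{p,r}$ cannot be made uniform in $x(\cdot)$ (rapidly oscillating functions of a fixed norm are approximated arbitrarily badly by any fixed partition), so the uniform conclusion must be routed through compactness of the image rather than proved in $L_p$; making the integral constraint $\sum_j\mu(\Omega_j)w_{\lambda_j}^p\le r^p$ survive all the discretizations (handled above by contracting first and by rounding magnitudes downward) is the main technical nuisance inside the lemma. Concretely, given $\varepsilon>0$ I would fix a finite $\tfrac{\varepsilon}{3}$-net $\{U(x_1(\cdot))|(\cdot),\dots,U(x_L(\cdot))|(\cdot)\}$ of $\mathcal{U}_{p,r}$ in $C(E;\mathbb{R}^n)$, choose $\eta>0$ with $l_0[\mu(\Omega)]^{\frac{p-1}{p}}\eta<\varepsilon/3$, apply the pointwise lemma to each $x_\ell(\cdot)$ with this $\eta$, put $\gamma_*(\varepsilon)=\max_\ell\gamma_0(x_\ell(\cdot),\eta)$ and let $\Delta_*(\varepsilon),\delta_*(\varepsilon),\sigma_*(\varepsilon)$ be the minima of the corresponding thresholds (one checks the lemma stays valid after $\gamma_0$ is enlarged to $\gamma_*(\varepsilon)$ and $\Lambda$ is a uniform partition of $[0,\gamma_*(\varepsilon)]$). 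Then for any admissible $\Delta\le\Delta_*(\varepsilon)$, $\delta\le\delta_*(\varepsilon)$, $\sigma\le\sigma_*(\varepsilon)$ and any partitions and net of those parameters, each $x_\ell(\cdot)$ has an approximant $x'_\ell(\cdot)\in V_{p,r}^{\gamma_*(\varepsilon),\Upsilon_1,\Lambda,\sigma}$ with $\|U(x_\ell(\cdot))|(\cdot)-U(x'_\ell(\cdot))|(\cdot)\|_C<\varepsilon/3$; for an arbitrary $y(\cdot)\in\mathcal{U}_{p,r}$ one picks $\ell$ with $\|y(\cdot)-U(x_\ell(\cdot))|(\cdot)\|_C<\varepsilon/3$ and obtains $\|y(\cdot)-U(x'_\ell(\cdot))|(\cdot)\|_C<\varepsilon$, which together with $\mathcal{U}_{p,r}^{\gamma_*(\varepsilon),\Upsilon_1,\Lambda,\sigma}\subset\mathcal{U}_{p,r}$ yields \eqref{os1}. (Equivalently one may organize this step around a nested refining family of parameters and Proposition \ref{prop13}.)

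Finally \eqref{os2} follows from \eqref{os1}. Since $\mathcal{U}_{p,r}^{\gamma_*(\varepsilon),\Upsilon_1,\Lambda,\sigma}\subset\mathcal{U}_{p,r}$, every point $(\xi_i,y(\xi_i))$ of $\mathcal{F}_{p,r}^{\gamma_*(\varepsilon),\Upsilon_1,\Lambda,\sigma,\Upsilon_2}$ already belongs to $\mathcal{F}_{p,r}$, so one Hausdorff direction is zero. For the other, take $(\xi,y(\xi))\in\mathcal{F}_{p,r}$, choose the cell $E_i\ni\xi$ of $\Upsilon_2$ (hence $\|\xi-\xi_i\|\le\Delta$), and by \eqref{os1} pick $y'(\cdot)\in\mathcal{U}_{p,r}^{\gamma_*(\varepsilon),\Upsilon_1,\Lambda,\sigma}$ with $\|y(\cdot)-y'(\cdot)\|_C<\varepsilon/2$; then Proposition \ref{prop11} gives
\begin{equation*}
\left\|(\xi,y(\xi))-(\xi_i,y'(\xi_i))\right\|\leq\|\xi-\xi_i\|+\|y(\xi)-y(\xi_i)\|+\|y(\xi_i)-y'(\xi_i)\|\leq\Delta+\beta_*\varphi(\Delta)+\tfrac{\varepsilon}{2},
\end{equation*}
which is $<\varepsilon$ once $\Delta_*(\varepsilon)$ is, in addition, taken so small that $\Delta+\beta_*\varphi(\Delta)<\varepsilon/2$ for all $\Delta\le\Delta_*(\varepsilon)$ (possible because $\varphi(\tau)\to0^+$ as $\tau\to0^+$). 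This establishes \eqref{os2}.
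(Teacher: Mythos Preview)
Your proposal is correct and complete, but it reaches \eqref{os1} by a genuinely different route from the paper.

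The paper works uniformly at every stage: it first truncates $x(\cdot)$ to $\gamma$-bounded functions with an explicit Chebyshev-type estimate (giving the concrete $\gamma_*(\varepsilon)=(10\kappa_*/\varepsilon)^{1/(p-1)}$), then Steklov-averages to land in Lipschitz functions, then invokes precompactness of $\mathcal{U}_{p,r}$ through Proposition~\ref{prop13} to replace the union over all Lipschitz constants by a single $R_*(\varepsilon)$, and only after that discretises via cell \emph{averages} (so Jensen/H\"older preserves the $L_p$ constraint without any slack), followed by the two rounding steps. Your scheme instead proves a \emph{pointwise} approximation lemma (contract to create slack, Steklov-average to gain boundedness and continuity, freeze at a point of each cell, round) and then uses precompactness through a finite $\varepsilon/3$-net of $\mathcal{U}_{p,r}$ in $C(E;\mathbb{R}^n)$ to upgrade it to the uniform estimate. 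Both approaches correctly identify that uniformity cannot be obtained at the $L_p$ level and must be routed through compactness of the image; they just invoke that compactness at different moments. The paper's organisation buys an explicit $\gamma_*(\varepsilon)$ and the convenience that no slack is needed (cell averaging replaces your freezing step); your organisation is somewhat leaner conceptually but makes $\gamma_*(\varepsilon)$ depend on a non-constructive net. Your derivation of \eqref{os2} from \eqref{os1} via Proposition~\ref{prop11} is essentially the same as the paper's Step~7.
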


\begin{proof}  The proof of the theorem will be carried out in 7 steps. At first let us prove the validity of the inequality (\ref{os1}).

\textbf{Step 1.} For given $\gamma >0$ we denote
\begin{eqnarray}\label{eq4311}
V_{p,r}^{\gamma}= \big\{ x(\cdot)\in V_{p,r} : \left\|x(s) \right\| \leq \gamma \ \mbox{for every} \ s \in \Omega \ \big\},
\end{eqnarray}
\begin{eqnarray}\label{eq4411}
\mathcal{U}_{p,r}^{\gamma}=\left\{U(x(\cdot))|(\cdot): x(\cdot)\in V_{p,r}^{\gamma}\right\},
\end{eqnarray} and let
\begin{eqnarray}\label{eq7}
\kappa_*=2l_0r^p .
\end{eqnarray}

It will be proved that the inequality
\begin{eqnarray}\label{eq611}
H_C\left(\mathcal{U}_{p,r},\mathcal{U}_{p,r}^{\gamma}\right) \leq \frac{\kappa_*}{\gamma^{p-1}}.
\end{eqnarray} is held, where the sets $\mathcal{U}_{p,r}$ and $\mathcal{U}_{p,r}^{\gamma}$ are defined by (\ref{eq1}) and (\ref{eq4411}) respectively.

Let us choose an arbitrary $y(\cdot)\in \mathcal{U}_{p,r}$ which is the image of  $x(\cdot)\in V_{p,r}$ under operator (\ref{oper}) where  $V_{p,r}$ is defined by (\ref{vepr}). Define new function $x_*(\cdot):\Omega \rightarrow \mathbb{R}^m$ setting
\begin{eqnarray}\label{eq10}
x_*(s)=\left\{
\begin{array}{llll}
\displaystyle \frac{x(s)}{\left\|x(s)\right\|} \gamma \, , & \mbox{if} & \ \left\|x(s)\right\| >\gamma ,\\
x(s)  \, , & \mbox{if}  & \ \left\|x(s)\right\| \leq \gamma.
\end{array}
\right.
\end{eqnarray}
It is not difficult to show that $x_*(\cdot) \in V_{p,r}^{\gamma}$ and let the function $y_*(\cdot):E \rightarrow \mathbb{R}^n$ be the image of the function $x_*(\cdot) \in V_{p,r}^{\gamma}$ where the set $V_{p,r}^{\gamma}$ is defined by (\ref{eq4311}). It is obvious that $y_*(\cdot)\in \mathcal{U}_{p,r}^{\gamma}$. Denote $\Omega_*=\left\{ s\in \Omega: \left\|x(s)\right\| > \gamma \right\}.$
Since $x(\cdot)\in V_{p,r}$, then Tchebyshev's inequality (see, \cite{whe}, p.82) yields
\begin{eqnarray}\label{eq13}
\mu(\Omega_*)  \leq \frac{r^p}{\gamma^p} \ .
\end{eqnarray}
From (\ref{oper}), (\ref{eq7}), (\ref{eq10}), (\ref{eq13}),  Condition 2.B and H\"{o}lder's inequality we obtain that
\begin{eqnarray*}
\displaystyle \left\|y(\xi) -y_*(\xi) \right\| \leq  \int_{\Omega_*}  l_0 \left\|x(s)-x_*(s)\right\| ds \leq 2rl_0[\mu(\Omega_*)]^{\frac{p-1}{p}} \leq \frac{2l_0 r^p}{\gamma^{p-1}}= \frac{\kappa_*}{\gamma^{p-1}}
\end{eqnarray*} for every $\xi \in E$ and consequently
\begin{eqnarray} \label{eq14} \displaystyle
\left\| y(\cdot) -y_*(\cdot)\right\|_{C} \leq \frac{\kappa_*}{\gamma^{p-1}} \ .
\end{eqnarray}

Since $y(\cdot)\in \mathcal{U}_{p,r}$ is arbitrarily chosen, then (\ref{eq14}) implies  that
\begin{eqnarray}\label{eq15}
\displaystyle \mathcal{U}_{p,r} \subset \mathcal{U}_{p,r}^{\gamma} +\frac{\kappa_*}{\gamma^{p-1}}B_C(1)
\end{eqnarray} where $B_C(1)$ is defined by (\ref{bc1}).
The inclusion $\mathcal{U}_{p,r}^{\gamma} \subset \mathcal{U}_{p,r}$ and (\ref{eq15}) yield the proof of the inequality (\ref{eq611}).

Let
\begin{eqnarray}\label{eq60}
\gamma_*(\varepsilon) =\left(\frac{10k_*}{\varepsilon}\right)^{\frac{1}{p-1}}
\end{eqnarray} where $k_*$ is defined by (\ref{eq7}). (\ref{eq611}) and (\ref{eq60}) imply that
\begin{eqnarray}\label{eq61}
H_C\left(\mathcal{U}_{p,r},\mathcal{U}_{p,r}^{\gamma_*(\varepsilon)}\right) \leq \frac{\varepsilon}{10} \, .
\end{eqnarray}

\vspace{3mm}

\textbf{Step 2.} For given $\gamma_*(\varepsilon) >0$ we denote
\begin{eqnarray}\label{eq4312}
V_{p,r}^{\gamma_*(\varepsilon), Lip}= \big\{ x(\cdot)\in V_{p,r}^{\gamma_*(\varepsilon)} : x(\cdot) : \Omega \rightarrow \mathbb{R}^m \ \mbox{is Lipschitz continuous} \big\},
\end{eqnarray} and let
\begin{eqnarray}\label{eq4412}
\mathcal{U}_{p,r}^{\gamma_*(\varepsilon),Lip}=\left\{U(x(\cdot))|(\cdot): x(\cdot)\in V_{p,r}^{\gamma_*(\varepsilon),Lip}\right\}.
\end{eqnarray}

In this step it will be shown that the equality
\begin{eqnarray}\label{eq2011}
h_C\left( \mathcal{U}_{p,r}^{\gamma_*(\varepsilon)}, \mathcal{U}_{p,r}^{\gamma_*(\varepsilon),Lip}\right) =0
\end{eqnarray}
is verified where the sets $\mathcal{U}_{p,r}^{\gamma_*(\varepsilon)}$ and $\mathcal{U}_{p,r}^{\gamma_*(\varepsilon),Lip}$ are defined by (\ref{eq4411}) and (\ref{eq4412}) respectively.

For $w\in \mathbb{R}^k$ and $\alpha >0$  we denote $B_k(w,\alpha)=\left\{y\in \mathbb{R}^k: \left\|y-w\right\| <  \alpha \right\},$  $\overline{B}_k(w,\alpha)=\big\{y\in \mathbb{R}^k: \left\|y-w\right\| \leq  \alpha \big\}.$

Let us choose an arbitrary  $x(\cdot)\in V_{p,r}^{\gamma_*(\varepsilon)}$ and let  $h \in (0,1)$ be fixed. Now we define the function $x_{h}(\cdot):\Omega \rightarrow \mathbb{R}^m$ setting
\begin{eqnarray}\label{eq23}
x_{h}(s)= \frac{1}{v_{h}} \int_{B_k(s,h)}x(\nu)d\nu \, , \ s\in \Omega
\end{eqnarray} where $v_h$ is the Lebesgue measure of the ball centered at the origin with radius $h$ in the space $\mathbb{R}^k$, i.e. $v_h =\mu (B_k(0,h)).$  Note that if $\nu \not \in \Omega $, then in the equality (\ref{eq23}) we assume that $x(\nu)=0.$ It is known that
\begin{eqnarray}\label{eq24}
v_h=c_* \cdot h^k,
\end{eqnarray}
where $\displaystyle c_*=\frac{\pi^{\displaystyle k/2}}{\Gamma \left(\displaystyle \frac{k}{2}+1 \right)} \, ,$ \ $\Gamma(\cdot)$ is Euler's function.

The function $x_h(\cdot)$ is called the Steklov average of function $x(\cdot).$ Note that in \cite{kan} (Lemma 1, p.317) it is proved that if $x(\cdot)\in L_p\left(\Omega; \mathbb{R}^m\right),$ then $x_h(\cdot)\in C\left(\Omega; \mathbb{R}^m\right).$ Here we follow the proof scheme from \cite{kan} to prove that for $x(\cdot)\in V_{p,r}^{\gamma_*(\varepsilon)}$ the inclusion $x_h(\cdot)\in V_{p,r}^{\gamma_*(\varepsilon), Lip}$ is satisfied where $V_{p,r}^{\gamma_*(\varepsilon), Lip}$ is defined by (\ref{eq4312}).

Since $x(\cdot)\in V_{p,r}^{\gamma_*(\varepsilon)}$, then from (\ref{eq23}) we have that $\left\|x_h(s)\right\|\leq \gamma_*(\varepsilon)$
for every $s \in \Omega.$ Applying H\"{o}lder's inequality and taking into consideration that $\left\|x(\cdot)\right\|_p\leq r$, it is not difficult to verify that the inequality $\left\|x_h(\cdot)\right\|_p \leq r$ holds. Thus, we have that $x_h(\cdot)\in V_{p,r}^{\gamma_*(\varepsilon)}.$ Now, let us prove that the function $x_h(\cdot):\Omega \rightarrow \mathbb{R}^k$ is Lipschitz continuous. Choose arbitrary $s_1 \in \Omega$ and $s_2 \in \Omega$.

For fixed $h\in (0,1)$ and chosen $s_1 \in \Omega$ and $s_2 \in \Omega$ two cases are possible:

Case 1: Let $\left\|s_2-s_1\right\| \geq 2h.$ From inclusion $x(\cdot)\in V_{p,r}^{\gamma_*(\varepsilon)}$,  H\"{o}lder's inequality, (\ref{eq23}) and (\ref{eq24}), it follows that
\begin{eqnarray}\label{eq28}
&& \left\|x_h(s_2)-x_h(s_1)\right\| \leq   \frac{r}{\left[c_* h^{k+p}\right]^{\frac{1}{p}}} \cdot \left\|s_2-s_1\right\|
\end{eqnarray} where $c_*$ is defined in (\ref{eq24}).

Case 2. Let $\left\|s_2-s_1\right\| < 2h.$  It is possible to show that  (\ref{eq23}), (\ref{eq24}) and inclusion $x(\cdot)\in V_{p,r}^{\gamma_*(\varepsilon)}$ imply the validity of the inequality
\begin{eqnarray}\label{eq34}
\left\|x_h(s_2)-x_h(s_1)\right\|\leq  \frac{k\gamma_*(\varepsilon)}{h} \left\|s_2-s_1\right\|.
\end{eqnarray}

Denote
\begin{eqnarray}\label{eq35}
\chi (\gamma_*(\varepsilon),h)=\max \left\{\frac{r}{\left[c_* h^{k+p}\right]^{\frac{1}{p}}} \, , \ \frac{k\gamma_*(\varepsilon)}{h} \right\}.
\end{eqnarray}

By virtue of (\ref{eq28}), (\ref{eq34}) and (\ref{eq35}) we have that for every $\xi_1\in \Omega,$ $\xi_2 \in \Omega$ the inequality
\begin{eqnarray}\label{eq36}
\left\|x_h(s_2)-x_h(s_1)\right\|\leq \chi (\gamma_*(\varepsilon), h) \cdot \left\|s_2-s_1\right\|
\end{eqnarray} is satisfied. This means that for fixed $\varepsilon>0$ and $h\in (0,1)$ the function $x_h(\cdot):\Omega \rightarrow \mathbb{R}^m$ is Lipschitz continuous. Since $x_h(\cdot) \in V_{p,r}^{\gamma_*(\varepsilon)},$ then from (\ref{eq4312}) and (\ref{eq36}) we conclude that $x_h(\cdot) \in V_{p,r}^{\gamma_*(\varepsilon), Lip}.$

Let $\rho >0$ be an arbitrarily chosen number. We will prove that
\begin{eqnarray}\label{eq36*}
h_C\left( \mathcal{U}_{p,r}^{\gamma_*(\varepsilon)}, \mathcal{U}_{p,r}^{\gamma_*(\varepsilon),Lip}\right) \leq \rho
\end{eqnarray} where  $\mathcal{U}_{p,r}^{\gamma_*(\varepsilon), Lip}$ is defined by (\ref{eq4412}).

Choose an arbitrary $\tilde{y}(\cdot)\in \mathcal{U}_{p,r}^{\gamma_*(\varepsilon)}$ which is the image of $\tilde{x}(\cdot)\in V_{p,r}^{\gamma_*(\varepsilon)}$ under operator (\ref{oper}).  Choose a sequence $\left\{h_j\right\}_{j=1}^{\infty}$ such that $h_j \in (0,1)$ for every $j=1,2,\ldots$ and $h_j \rightarrow 0^+$ as $j\rightarrow \infty$. Define new function $x_j(\cdot):\Omega \rightarrow \mathbb{R}^m$ setting
\begin{eqnarray*}
x_{j}(s)= \frac{1}{v_{h_j}} \int_{B_k(s,h_j)}\tilde{x}(\nu)d\nu, \ \ s \in \Omega,
\end{eqnarray*} which is Steklov average of the function $\tilde{x}(\cdot)$ for $h_j\in (0,1).$ Then we have that  $x_j(\cdot)\in V_{p,r}^{\gamma_*(\varepsilon),Lip}$ for every $j=1,2,\ldots$ According to the Lemma 4 from \cite{kan} (p.319) we have that $\left\|x_j(\cdot)-\tilde{x}(\cdot)\right\|_p \rightarrow 0$ as $j\rightarrow +\infty$, and hence for $\rho >0$ there exists $j_*>0$ such that
\begin{eqnarray}\label{eq39}
\left\|x_{j_*}(\cdot)-\tilde{x}(\cdot)\right\|_p \leq \frac{\rho}{l_0 \left[\mu(\Omega)\right]^{\frac{p-1}{p}}}
\end{eqnarray} where $l_{0}$ is defined in condition 2.B.

Let $\tilde{y}_*(\cdot):E\rightarrow \mathbb{R}^n$ be the image of $x_{j_*}(\cdot)\in V_{p,r}^{\gamma_*(\varepsilon),Lip}$ under operator (\ref{oper}). Then $\tilde{y}_*(\cdot)\in \mathcal{U}_{p,r}^{\gamma_*(\varepsilon), Lip}$ and (\ref{oper}), (\ref{eq39}), condition 2.B and H\"{o}lder's inequality imply that
\begin{eqnarray*}
\left\|\tilde{y}(\xi)-\tilde{y}_*(\xi)\right\| \leq l_0\int_{\Omega} \left\|\tilde{x}(s)-x_{j_*}(s)\right\|ds  \leq l_0 \left[\mu(\Omega)\right]^{\frac{p-1}{p}}\left\|\tilde{x}(\cdot)-x_{j_*}(\cdot)\right\|_p \leq \rho
\end{eqnarray*}for every $\xi \in E$, and hence $\left\|\tilde{y}(\cdot)-\tilde{y}_*(\cdot)\right\|_C  \leq \rho.$ This means that
\begin{eqnarray}\label{eq41}
\mathcal{U}_{p,r}^{\gamma_*(\varepsilon)} \subset \mathcal{U}_{p,r}^{\gamma_*(\varepsilon), Lip}+\rho B_C(1).
\end{eqnarray}
Since $\mathcal{U}_{p,r}^{\gamma_*(\varepsilon),Lip} \subset \mathcal{U}_{p,r}^{\gamma_*(\varepsilon)}$, then from (\ref{eq41}) it follows the validity of the inequality (\ref{eq36*}). Finally, since $\rho >0$ is arbitrarily chosen, then (\ref{eq36*}) yields the validity of the equality (\ref{eq2011}).

\vspace{3mm}

\textbf{Step 3.} For given $\gamma_*(\varepsilon) >0$ and integer $R>0$ we denote
\begin{eqnarray}\label{eq43}
V_{p,r}^{\gamma_*(\varepsilon), Lip, R}= \big\{ x(\cdot)\in V_{p,r}^{\gamma_*(\varepsilon), Lip} : \ \mbox{Lipschitz constant of} \ x(\cdot) \nonumber \\ \mbox{is not greater than} \ R \big\},
\end{eqnarray}
\begin{eqnarray}\label{eq44}
\mathcal{U}_{p,r}^{\gamma_*(\varepsilon),Lip,R}=\left\{U(x(\cdot))|(\cdot): x(\cdot)\in V_{p,r}^{\gamma_*(\varepsilon),Lip,R}\right\}.
\end{eqnarray}

It is not difficult to verify that $V_{p,r}^{\gamma_*(\varepsilon), Lip, R}\subset C(\Omega; \mathbb{R}^m)$ and $\mathcal{U}_{p,r}^{\gamma_*(\varepsilon), Lip, R}\subset C(E; \mathbb{R}^n)$ are compact sets. Moreover, one can show that $V_{p,r}^{\gamma_*(\varepsilon), Lip}= \displaystyle \bigcup_{R=1}^{+\infty}V_{p,r}^{\gamma_*(\varepsilon), Lip, R}$ and hence
\begin{eqnarray}\label{eq411}
\mathcal{U}_{p,r}^{\gamma_*(\varepsilon),Lip} = \bigcup_{R=1}^{+\infty} \mathcal{U}_{p,r}^{\gamma_*(\varepsilon), Lip,R}
\end{eqnarray} where $V_{p,r}^{\gamma_*(\varepsilon), Lip, R}$ and $\mathcal{U}_{p,r}^{\gamma_*(\varepsilon), Lip, R}$ are defined by (\ref{eq43}) and (\ref{eq44}) respectively.

Now, from (\ref{eq61}), (\ref{eq2011}) and (\ref{eq411}) it follows that

\begin{eqnarray}\label{equ61}
H_C\left(\mathcal{U}_{p,r},\bigcup_{R=1}^{\infty}\mathcal{U}_{p,r}^{\gamma_*(\varepsilon), Lip, R}\right) \leq \frac{\varepsilon}{10} \, .
\end{eqnarray}

According to the Proposition \ref{prop10} we have that $\mathcal{U}_{p,r}\subset C(E; \mathbb{R}^n)$ is a precompact set and $\mathcal{U}_{p,r}^{\gamma_*(\varepsilon), Lip, R}\subset \mathcal{U}_{p,r}^{\gamma_*(\varepsilon), Lip, R+1}\subset \mathcal{U}_{p,r}$ for every $R=1,2,\ldots$  Then by virtue of (\ref{equ61}) and  the Proposition \ref{prop12} we have that for $\frac{\varepsilon}{10}$ there exists integer $R_*(\varepsilon)>0$ such that
\begin{eqnarray*}
H_C\left(\mathcal{U}_{p,r},\mathcal{U}_{p,r}^{\gamma_*(\varepsilon), Lip, R}\right) \leq \frac{\varepsilon}{10}+\frac{\varepsilon}{10}=\frac{\varepsilon}{5}
\end{eqnarray*} for every $R\geq R_*(\varepsilon),$ and consequently
\begin{eqnarray}\label{equ62}
H_C\left(\mathcal{U}_{p,r},\mathcal{U}_{p,r}^{\gamma_*(\varepsilon), Lip, R_*(\varepsilon)}\right) \leq \frac{\varepsilon}{5}.
\end{eqnarray}

\textbf{Step 4.}  For given $\Delta$-partition $\Upsilon_1 =\left\{\Omega_1, \Omega_2, \ldots, \Omega_M\right\}$ of the compact set $\Omega$ we set
\begin{eqnarray}\label{eqq700}
V_{p,r}^{\gamma_*(\varepsilon), \Upsilon_1}= \big\{ x(\cdot)\in V_{p,r}^{\gamma_*(\varepsilon)} : x(s)=x_j \ \mbox{for every} \ s \in \Omega_j, \ j=1,2,\ldots, M \big\},
\end{eqnarray} and let
\begin{eqnarray*}
\mathcal{U}_{p,r}^{\gamma_*(\varepsilon),\Upsilon_1}=\left\{U(x(\cdot))|(\cdot): x(\cdot)\in V_{p,r}^{\gamma_*(\varepsilon),\Upsilon_1}\right\},
\end{eqnarray*}
where $\gamma_*(\varepsilon)$ is defined by (\ref{eq60}).

Now let us choose an arbitrary $\hat{y}(\cdot)\in \mathcal{U}_{p,r}^{\gamma_*(\varepsilon),Lip, R_*(\varepsilon)}$ which is the image of $\hat{x}(\cdot)\in V_{p,r}^{\gamma_*(\varepsilon),Lip, R_*(\varepsilon)}$ where $V_{p,r}^{\gamma_*(\varepsilon),Lip, R_*(\varepsilon)}$ is defined by (\ref{eq43}). This means that
\begin{eqnarray}\label{eqq8}
\begin{array}{llll}
\left\|\hat{x}(\cdot)\right\|_p \leq r,  \ \ \left\|\hat{x}(s)\right\| \leq \gamma_*(\varepsilon) \ \mbox{for every} \ s\in \Omega, \\
\left\|\hat{x}(s_*)-\hat{x}(s^*)\right\| \leq R_*(\varepsilon)\left\|s_*-s^*\right\| \ \mbox{for every} \ s_*\in \Omega, \ s^* \in \Omega.
\end{array}
\end{eqnarray}

Define the function $\hat{x}_*(\cdot):\Omega \rightarrow \mathbb{R}^m$ setting
\begin{eqnarray}\label{eqq9}
\hat{x}_*(s)=\frac{1}{\mu(\Omega_j)}\int_{\Omega_j}\hat{x}(\nu)d\nu,  \ s \in \Omega_j, \ j=1,2,\ldots, M.
\end{eqnarray}

From (\ref{eqq8}) it follows that
\begin{eqnarray}\label{eqq10}
\left\|\hat{x}_*(s)\right\|\leq \gamma_*(\varepsilon)
\end{eqnarray} for every $s\in \Omega.$

From (\ref{eqq9}) and H\"{o}lder's inequality we obtain
\begin{eqnarray*}
\left\|\hat{x}_*(s)\right\| \leq \frac{1}{\left[\mu(\Omega_j)\right]^{\frac{1}{p}}} \left( \int_{\Omega_j}\left\|\hat{x}(\nu)\right\|^p d\nu \right)^{\frac{1}{p}}
\end{eqnarray*} for every $s\in \Omega_j,$ and hence
\begin{eqnarray*}
 \int_{\Omega_j}\left\|\hat{x}_*(\nu)\right\|^p d\nu \leq \int_{\Omega_j}\left\|\hat{x}(\nu)\right\|^p d\nu
\end{eqnarray*} for every $j=1,2,\ldots, M.$ The last inequality and (\ref{eqq8}) imply that
\begin{eqnarray}\label{eqq11}
\int_{\Omega}\left\|\hat{x}_*(\nu)\right\|^p d\nu &=&\sum_{j=1}^{M} \int_{\Omega_j}\left\|\hat{x}_*(\nu)\right\|^pd\nu \leq \sum_{j=1}^{M} \int_{\Omega_j}\left\|\hat{x}(\nu)\right\|^pd\nu \nonumber \\ &=&\int_{\Omega}\left\|\hat{x}(\nu)\right\|^pd\nu   \leq r^p .
\end{eqnarray}

(\ref{eqq9}), (\ref{eqq10}) and (\ref{eqq11}) yield $\hat{x}_*(\cdot)\in V_{p,r}^{\gamma_*(\varepsilon),\Upsilon_1}.$

Let us choose an arbitrary $s\in \Omega$ and fix it. Since $\Upsilon_1=\left\{\Omega_1,\Omega_2,\ldots,\Omega_M\right\}$ is a finite $\Delta$-partition of $\Omega,$ then on behalf of Definition \ref{def11} we have that there exists $j_*=1,2,\ldots, M$ such that $s\in \Omega_{j_*}$ where $diam (\Omega_{j_*})\leq \Delta.$ Inclusion $\hat{x}(\cdot)\in V_{p,r}^{\gamma_*(\varepsilon),Lip, R_*(\varepsilon)}$ and (\ref{eqq9}) imply
\begin{eqnarray}\label{eqq12}
\left\|\hat{x}(s)-\hat{x}_*(s)\right\| &\leq & \frac{1}{\mu(\Omega_{j_*})}\int_{\Omega_{j_*}}\left\|\hat{x}(s)-\hat{x}(\nu)\right\| d\nu \nonumber \\ &\leq & \frac{1}{\mu(\Omega_{j_*})}R_*(\varepsilon)\int_{\Omega_{j_*}}\left\|s-\nu \right\| d\nu  \leq R_*(\varepsilon)\Delta \, .
\end{eqnarray}

Now let $\hat{y}_*(\cdot)$ be the image of $\hat{x}_*(\cdot)$ under operator (\ref{oper}). Then $\hat{y}_*(\cdot)\in \mathcal{U}_{p,r}^{\gamma_*(\varepsilon),\Upsilon_1}$ and from condition 2.B and (\ref{eqq12}) we have
\begin{eqnarray*}
\left\|\hat{y}(\xi)-\hat{y}_*(\xi)\right\| \leq \int_{\Omega}l_0 \left\|\hat{x}(\nu)-\hat{x}_*(\nu)\right\|d\nu \leq l_0\mu(\Omega)R_*(\varepsilon) \Delta
\end{eqnarray*} for every $\xi \in E$ and consequently
\begin{eqnarray*}
\left\|\hat{y}(\cdot)-\hat{y}_*(\cdot)\right\|_C \leq  l_0\mu(\Omega)R_*(\varepsilon) \Delta.
\end{eqnarray*}

Since $\hat{y}(\cdot)\in \mathcal{U}_{p,r}^{\gamma_*(\varepsilon),Lip, R_*(\varepsilon)}$ is arbitrarily chosen, $\hat{y}_*(\cdot)\in \mathcal{U}_{p,r}^{\gamma_*(\varepsilon),\Upsilon_1}$, the last inequality implies that
\begin{eqnarray}\label{eqq13}
\mathcal{U}_{p,r}^{\gamma_*(\varepsilon),Lip, R_*(\varepsilon)} \subset
\mathcal{U}_{p,r}^{\gamma_*(\varepsilon),\Upsilon_1} + l_0\mu(\Omega)R_*(\varepsilon) \Delta B_C(1)
\end{eqnarray} where $B_C(1)$ is defined by (\ref{bc1}).

Since $\varphi(\tau) \rightarrow 0^+$ as $\tau \rightarrow 0^+$, then there exists $\Delta_1(\varepsilon)>0$  such that
\begin{eqnarray}\label{eqq15*}
\varphi(\Delta)\leq \frac{\varepsilon}{10\beta_*}
\end{eqnarray} for every $\Delta \in (0,\Delta_1(\varepsilon)],$ where $\varphi(\cdot)$ is given in condition 2.C, $\beta_*$ is defined by (\ref{betaa}). Denote
\begin{eqnarray}\label{eqq15}
\Delta_*(\varepsilon) = \min \left\{ \frac{\varepsilon}{10 l_0 \mu(\Omega)R_*(\varepsilon)}, \frac{\varepsilon}{10}, \Delta_1(\varepsilon) \right\}.
\end{eqnarray}

From (\ref{equ62}), (\ref{eqq13}) and (\ref{eqq15}) it follows that for every $\Delta \in (0, \Delta_*(\varepsilon)]$ the inclusion
\begin{eqnarray*}
\mathcal{U}_{p,r} \subset \mathcal{U}_{p,r}^{\gamma_*(\varepsilon),\Upsilon_1} + \frac{3\varepsilon}{10}B_C(1)
\end{eqnarray*} is satisfied.

Since $\mathcal{U}_{p,r}^{\gamma_*(\varepsilon),\Upsilon_1} \subset \mathcal{U}_{p,r},$ then the last inclusion yield that for every $\Delta \in (0, \Delta_*(\varepsilon))$ the inequality
\begin{eqnarray}\label{eqq16}
H_C \left(\mathcal{U}_{p,r}, \mathcal{U}_{p,r}^{\gamma_*(\varepsilon),\Upsilon_1}\right) \leq \frac{3\varepsilon}{10}
\end{eqnarray} is held.

\vspace{3mm}

\textbf{Step 5.} For given $\gamma_*(\varepsilon)>0,$ $\Delta$-partition $\Upsilon_1 =\left\{\Omega_1, \Omega_2, \ldots , \Omega_M\right\}$ of the set $\Omega$ and uniform $\delta$-partition $\Lambda =\left\{0=w_0<w_1<\ldots <w_q =\gamma_*(\varepsilon)\right\}$ of the closed interval $\left[0,\gamma_*(\varepsilon)\right]$  we set
\begin{eqnarray}\label{v1}
V_{p,r}^{\gamma_*(\varepsilon), \Upsilon_1, \Lambda}&=& \big\{ x(\cdot)\in V_{p,r}^{\gamma_*(\varepsilon),\Upsilon_1} : x(s)=x_j \ \mbox{for every} \ s \in \Omega_j, \nonumber \\
&& \qquad \left\|x_j\right\| \in \Lambda, \ j=1,2,\ldots, M \big\},
\end{eqnarray} where $\delta =w_{\lambda+1}-w_{\lambda},$ $\lambda= 0,1,\ldots, q-1$, $\gamma_*(\varepsilon)$ is defined by (\ref{eq60}), and let
\begin{eqnarray*}
\mathcal{U}_{p,r}^{\gamma_*(\varepsilon),\Upsilon_1,\Lambda}=\left\{U(x(\cdot))|(\cdot): x(\cdot)\in V_{p,r}^{\gamma_*(\varepsilon),\Upsilon_1, \Lambda}\right\}.
\end{eqnarray*}

Let us choose an arbitrary $y_0(\cdot)\in \mathcal{U}_{p,r}^{\gamma_*(\varepsilon),\Upsilon_1}$ which is the image of $x_0(\cdot) \in V_{p,r}^{\gamma_*(\varepsilon),\Upsilon_1}.$ Then by virtue of (\ref{eqq700}) we have that
\begin{eqnarray}\label{eqq18}
\begin{array}{lllll}
\displaystyle x_0(s)=x_j, \ \left\|x_j \right\| \leq \gamma_*(\varepsilon) \ \mbox{for every} \ s\in \Omega_j \ \mbox{and} \ j=1,2, \ldots, M,  \\ \displaystyle \sum_{j=1}^{M} \mu(\Omega_j) \left\|x_j\right\|^p \leq r^p .
\end{array}
\end{eqnarray}

The inequality $0\leq \left\|x_j \right\| \leq \gamma_*(\varepsilon)$ implies that if $\left\|x_j \right\| <\gamma_*(\varepsilon)$, then there exists $w_{\lambda_j}\in \Lambda$ such that
\begin{eqnarray}\label{eqq19}
\left\|x_j \right\| \in \left[w_{\lambda_j}, w_{\lambda_j +1}\right).
\end{eqnarray}
Define the function $\tilde{x}_0(\cdot):\Omega \rightarrow \mathbb{R}^m$ setting
\begin{eqnarray}\label{eqq20}
\tilde{x}_0(s)=\left\{
\begin{array}{llll}
\displaystyle \frac{x_j}{\left\|x_j\right\|}w_{\lambda_j} \, , &  \mbox{if}  & 0< \left\|x_j \right\| < \gamma_*(\varepsilon),\\
x_j \, , & \mbox{if} & \left\|x_j \right\| =0 \  \mbox{or} \ \left\|x_j\right\| = \gamma_*(\varepsilon)
\end{array}
\right.
\end{eqnarray} where $s\in \Omega_j,$ $w_{\lambda_j}\in \Lambda$ is defined in (\ref{eqq19}), $j=1,2,\ldots,M.$ From (\ref{eqq19}) and (\ref{eqq20})  it follows that if $0< \left\|x_j \right\| < \gamma_*(\varepsilon),$ then
\begin{eqnarray*}
\left\| x_0(s)-\tilde{x}_0(s)\right\|=\left\| x_j -\frac{x_j}{\left\|x_j\right\|}w_{\lambda_j}\right\| = \left\|x_j\right\| -w_{\lambda_j} \leq \delta
\end{eqnarray*} for every $s\in \Omega_j$ where $\delta=w_{\lambda +1}-w_{\lambda},$ $\lambda =0,1,\ldots, q-1,$ is diameter of the uniform partition $\Lambda .$ If $\left\|x_j \right\| =0$ or $\left\|x_j\right\| = \gamma_*(\varepsilon)$ then $\left\| x_0(s)-\tilde{x}_0(s)\right\|=0$ for every $s\in \Omega_j.$ Thus we have that
\begin{eqnarray}\label{eqq21}
\left\| x_0(s)-\tilde{x}_0(s)\right\| \leq \delta
\end{eqnarray} for every $s\in \Omega.$

(\ref{eqq18}), (\ref{eqq19}) and (\ref{eqq20}) yield that $\left\|\tilde{x}_0(s)\right\| \leq \left\|x_0(s)\right\|$ for every $s\in \Omega$ and consequently $\left\|\tilde{x}_0(s)\right\| \leq \gamma_*(\varepsilon)$ for every $s\in \Omega$ and $\left\|\tilde{x}_0(\cdot)\right\|_p \leq \left\|x_0(\cdot)\right\|_p  \leq r^p.$ Thus we obtain that $\tilde{x}_0(\cdot)\in V_{p,r}^{\gamma_*(\varepsilon),\Upsilon_1, \Lambda}$ and let $\tilde{y}_0(\cdot):E\rightarrow \mathbb{R}^n$ be the image of the function $\tilde{x}_0(\cdot)$ under operator (\ref{oper}). Then $\tilde{y}_0(\cdot)\in \mathcal{U}_{p,r}^{\gamma_*(\varepsilon),\Upsilon_1, \Lambda}$ and
the condition 2.B, (\ref{oper}) and (\ref{eqq21}) imply that
\begin{eqnarray*}
\left\| y_0(\xi)-\tilde{y}_0(\xi)\right\| \leq l_0 \int_{\Omega}\left\|x_0(s)-\tilde{x}_0(s)\right\| ds \leq l_0 \mu(\Omega)\delta
\end{eqnarray*} for every $\xi \in E,$ and hence
\begin{eqnarray}\label{eqq22}
\left\| y_0(\cdot)-\tilde{y}_0(\cdot)\right\|_C \leq  l_0 \mu(\Omega)\delta.
\end{eqnarray} Since $y_0(\cdot)\in \mathcal{U}_{p,r}^{\gamma_*(\varepsilon),\Upsilon_1}$ is an arbitrarily chosen function and $\tilde{y}_0(\cdot)\in \mathcal{U}_{p,r}^{\gamma_*(\varepsilon),\Upsilon_1, \Lambda},$ then (\ref{eqq22}) yields that
\begin{eqnarray*}
\mathcal{U}_{p,r}^{\gamma_*(\varepsilon),\Upsilon_1, \Lambda} \subset \mathcal{U}_{p,r}^{\gamma_*(\varepsilon),\Upsilon_1} + l_0 \mu(\Omega)\delta \cdot B_C(1).
\end{eqnarray*} Taking into consideration, that $\mathcal{U}_{p,r}^{\gamma_*(\varepsilon),\Upsilon_1, \Lambda} \subset  \mathcal{U}_{p,r}^{\gamma_*(\varepsilon),\Upsilon_1},$ from the last inclusion we obtain that
\begin{eqnarray}\label{eqq23}
H_C\left( \mathcal{U}_{p,r}^{\gamma_*(\varepsilon),\Upsilon_1, \Lambda}, \mathcal{U}_{p,r}^{\gamma_*(\varepsilon),\Upsilon_1}\right) \leq l_0 \mu(\Omega)\delta.
\end{eqnarray}

Denote
\begin{eqnarray}\label{eqq24}
\delta_* \left( \varepsilon\right) = \frac{\varepsilon}{10 l_0 \mu(\Omega)}.
\end{eqnarray}

From (\ref{eqq23}) and (\ref{eqq24}) it follows that for every partition $\Lambda$ such that $\delta \in (0,\delta_*(\varepsilon)]$ the inequality
\begin{eqnarray}\label{eqq25}
H_C\left( \mathcal{U}_{p,r}^{\gamma_*(\varepsilon),\Upsilon_1, \Lambda}, \mathcal{U}_{p,r}^{\gamma_*(\varepsilon),\Upsilon_1}\right) \leq \frac{\varepsilon}{10}
\end{eqnarray} is satisfied.

Note that the inequality (\ref{eqq25}) holds true for every finite $\Delta$-partition $\Upsilon_1$ of the compact set $\Omega.$ Finally, from (\ref{eqq16}) and (\ref{eqq25}) we obtain that for every finite $\Delta$-partition $\Upsilon_1$ of the compact set $\Omega$ and uniform
$\delta$-partition $\Lambda$ of the closed interval $\left[0,\gamma_*(\varepsilon)\right]$ such that  $\Delta \in (0, \Delta_*(\varepsilon)],$ $\delta \in (0, \delta_*(\varepsilon)],$ the inequality
\begin{eqnarray}\label{eqq26}
H_C\left( \mathcal{U}_{p,r}, \mathcal{U}_{p,r}^{\gamma_*(\varepsilon),\Upsilon_1,\Lambda}\right) \leq \frac{2\varepsilon}{5}
\end{eqnarray} is satisfied where $\Delta_*(\varepsilon)$ is defined by (\ref{eqq15}).

\vspace{3mm}

\textbf{Step 6.} Let us show that for given $\gamma_*(\varepsilon)>0,$ finite $\Delta$-partition $\Upsilon_1 =\left\{\Omega_1, \Omega_2, \ldots , \Omega_M\right\}$ of the set $\Omega$, uniform $\delta$-partition $\Lambda =\left\{0=w_0<w_1<\ldots <w_q =\gamma_*(\varepsilon)\right\}$ of the closed interval $\left[0,\gamma_*(\varepsilon)\right]$ where $\delta =w_{\lambda+1}-w_{\lambda},$ $\lambda= 0,1,\ldots, q-1$, and $\sigma >0$ the inclusion
\begin{eqnarray}\label{eqq27}
\mathcal{U}_{p,r}^{\gamma_*(\varepsilon),\Upsilon_1, \Lambda} \subset  \mathcal{U}_{p,r}^{\gamma_*(\varepsilon),\Upsilon_1,\Lambda,\sigma} + \gamma_*(\varepsilon)\sigma B_C(1)
\end{eqnarray} is satisfied where the set $\mathcal{U}_{p,r}^{\gamma_*(\varepsilon),\Upsilon_1,\Lambda,\sigma}$ is defined by (\ref{eqq2}).

Choose an arbitrary $\overline{y}(\cdot)\in \mathcal{U}_{p,r}^{\gamma_*(\varepsilon),\Upsilon_1, \Lambda}$ which is the image of $\overline{x}(\cdot) \in V_{p,r}^{\gamma_*(\varepsilon),\Upsilon_1, \Lambda}.$ From (\ref{v1}) we have that
\begin{eqnarray}\label{eqq28}
\overline{x}(s)=w_{\lambda_j}a_j, \ s \in \Omega_j,
\end{eqnarray}
where $w_{\lambda_j} \in \Lambda,$ $a_j \in S=\left\{x\in \mathbb{R}^m:\left\|x\right\|=1\right\},$  $j=1,2,\ldots, M,$ $\sum_{j=1}^{M} \mu(\Omega_j) w_{\lambda_j}^{p} \leq r^p.$ Since $S_{\sigma}$ is a finite $\sigma$-net on $S$, then for each $a_j \in S$ there exists $b_{\lambda_j}\in S_{\sigma}$ such that $\left\|a_j-b_{\lambda_j}\right\| \leq \sigma.$ Define new function $\overline{x}_*(\cdot):\Omega \rightarrow \mathbb{R}^m$ setting
\begin{eqnarray}\label{eqq29}
\overline{x}_*(s)=w_{\lambda_j}b_{\lambda_j}, \ s \in \Omega_j,
\end{eqnarray}
where $j=1,2,\ldots, M.$ From (\ref{eqq1}), (\ref{eqq28}) and (\ref{eqq29}) it follows that $\overline{x}_*(\cdot) \in$ \linebreak $ V_{p,r}^{\gamma_*(\varepsilon),\Upsilon_1, \Lambda,\sigma}$ and
\begin{eqnarray*}
\left\| \overline{x}(s)-\overline{x}_*(s)\right\| \leq w_{\lambda_j}\left\|a_j-b_{\lambda_j}\right\| \leq \gamma_*(\varepsilon)\sigma
\end{eqnarray*}  for every $s \in \Omega_j$ and $j=1,2,\ldots,M$ and hence
\begin{eqnarray}\label{eqq30}
\left\| \overline{x}(s)-\overline{x}_*(s)\right\| \leq \gamma_*(\varepsilon)\sigma
\end{eqnarray} for every $s\in \Omega.$

Let $\overline{y}_*(\cdot):E\rightarrow \mathbb{R}^n$ be the image of $\overline{x}_*(\cdot)\in V_{p,r}^{\gamma_*(\varepsilon),\Upsilon_1, \Lambda,\sigma}$ defined by (\ref{eqq29}). Then $\overline{y}_*(\cdot) \in \mathcal{U}_{p,r}^{\gamma_*(\varepsilon),\Upsilon_1, \Lambda,\sigma}$ and condition 2.B, (\ref{oper}) and (\ref{eqq30})
imply that
\begin{eqnarray*}
\left\| \overline{y}(\xi)-\overline{y}_*(\xi)\right\| \leq \int_{\Omega}l_0 \left\| \overline{x}(s)-\overline{x}_*(s)\right\|ds \leq l_0 \mu(\Omega)\gamma_*(\varepsilon)\sigma
\end{eqnarray*}  for every $\xi \in E$ and hence
\begin{eqnarray*}
\left\| \overline{y}(\cdot)-\overline{y}_*(\cdot)\right\|_C \leq l_0 \mu(\Omega)\gamma_*(\varepsilon)\sigma.
\end{eqnarray*}

Since $\overline{y}(\cdot) \in \mathcal{U}_{p,r}^{\gamma_*(\varepsilon),\Upsilon_1, \Lambda}$ is an arbitrarily chosen function, $\overline{y}_*(\cdot) \in \mathcal{U}_{p,r}^{\gamma_*(\varepsilon),\Upsilon_1, \Lambda,\sigma},$ then from the last inequality we obtain the validity of the inclusion (\ref{eqq27}). From (\ref{eqq27}) and inclusion $\mathcal{U}_{p,r}^{\gamma_*(\varepsilon),\Upsilon_1,\Lambda,\sigma} \subset  \mathcal{U}_{p,r}^{\gamma_*(\varepsilon),\Upsilon_1,\Lambda}$ it follows that
\begin{eqnarray}\label{eqq31}
H_C\left(\mathcal{U}_{p,r}^{\gamma_*(\varepsilon),\Upsilon_1, \Lambda},   \mathcal{U}_{p,r}^{\gamma_*(\varepsilon),\Upsilon_1,\Lambda,\sigma}\right) \leq l_0 \mu(\Omega)\gamma_*(\varepsilon)\sigma.
\end{eqnarray}

Let
\begin{eqnarray}\label{sigma*}
\sigma_*(\varepsilon)=\sigma_*\left(\varepsilon,\gamma_*\left(\varepsilon\right)\right)=\frac{\varepsilon}{10 l_0 \mu(\Omega)\gamma_*(\varepsilon)}
\end{eqnarray}

(\ref{eqq31}) and (\ref{sigma*}) yield that for every $\sigma \in \left(0,\sigma_*\left(\varepsilon \right)\right]$ the inequality
\begin{eqnarray}\label{eqq32}
H_C\left(\mathcal{U}_{p,r}^{\gamma_*(\varepsilon),\Upsilon_1, \Lambda},   \mathcal{U}_{p,r}^{\gamma_*(\varepsilon),\Upsilon_1,\Lambda,\sigma}\right) \leq  \frac{\varepsilon}{10}
\end{eqnarray} holds.

From (\ref{eqq26}) and (\ref{eqq32}) we conclude that for every finite $\Delta$-partition $\Upsilon_1$ of the compact set $\Omega$ and
uniform $\delta$-partition $\Lambda$ of the closed interval $\left[0,\gamma_*(\varepsilon)\right]$ such that  $\Delta \in (0, \Delta_*(\varepsilon)],$ $\delta \in (0, \delta_*(\varepsilon)]$ and for every $\sigma \in \left(0,\sigma_*(\varepsilon)\right]$ the inequality
\begin{eqnarray}\label{eqq33}
H_C\left( \mathcal{U}_{p,r}, \mathcal{U}_{p,r}^{\gamma_*(\varepsilon),\Upsilon_1,\Lambda,\sigma}\right) \leq \frac{\varepsilon}{2}
\end{eqnarray} is verified.

Thus, validity of the inequality (\ref{os1}) is proved.
\vspace{3mm}

\textbf{Step 7.} Now, in the last step the validity of the inequality (\ref{os2}) will be proved.

It is obvious that for given $\gamma_*(\varepsilon)>0$, for every finite
$\Delta$-partitions $\Upsilon_1 =\big\{\Omega_1, \Omega_2, \ldots , \Omega_M\big\}$ of the compact set $\Omega\subset \mathbb{R}^k$ and  $\Upsilon_2 =\left\{E_1, E_2, \ldots, E_N\right\}$ of the compact set $E\subset \mathbb{R}^b,$ uniform $\delta$-partition $\Lambda =\big\{0=w_0<w_1<\ldots <w_q =\gamma_*(\varepsilon)\big\}$ of the closed interval $\left[0,\gamma_*(\varepsilon)\right]$ where $\delta =w_{\lambda+1}-w_{\lambda},$ $\lambda= 0,1,\ldots, q-1$, and $\sigma >0$ the inclusion
\begin{eqnarray}\label{eq66}
\mathcal{F}_{p,r}^{\gamma_*(\varepsilon), \Upsilon_1, \Lambda, \sigma, \Upsilon_2} \subset \mathcal{F}_{p,r}
\end{eqnarray} is satisfied where $\gamma_*(\varepsilon)>0$ is defined by (\ref{eq60}), $\mathcal{F}_{p,r}$ is the integral funnel of the closed ball $V_{p,r}\subset L_p(\Omega;\mathbb{R}^m)$ under operator (\ref{oper}) and is defined by (\ref{eq3}), the set $\mathcal{F}_{p,r}^{\gamma_*(\varepsilon), \Upsilon_1, \Lambda, \sigma, \Upsilon_2}$ consists of a finite number of points and is defined by (\ref{eqq4}).

Now let $\gamma_*(\varepsilon)>0$ be defined by (\ref{eq60}), finite $\Delta$-partitions $\Upsilon_1 =\left\{\Omega_1, \Omega_2, \ldots , \Omega_M\right\}$ of the compact set $\Omega$ and  $\Upsilon_2 =\left\{E_1, E_2, \ldots, E_N\right\}$ of the compact set $E\subset \mathbb{R}^b$ be such that $\Delta \in \left(0,\Delta_*(\varepsilon)\right]$, uniform $\delta$-partition $\Lambda =\left\{0=w_0<w_1<\ldots <w_q =\gamma_*(\varepsilon)\right\}$ of the closed interval $\left[0,\gamma_*(\varepsilon)\right]$  be such that $\delta \in \left(0,\delta_*(\varepsilon)\right],$ and $\sigma \in \left(0, \sigma_*(\varepsilon)\right]$ where $\delta =w_{\lambda+1}-w_{\lambda},$ $\lambda= 0,1,\ldots, q-1,$ $\Delta_*(\varepsilon)>0,$ $\delta_*(\varepsilon)>0$ and $\sigma_*(\varepsilon)>0$ are defined by (\ref{eqq15}), (\ref{eqq24}), (\ref{sigma*}) respectively. By virtue of (\ref{eqq33}) we have
\begin{eqnarray}\label{eqq33*}
H_n\left( \mathcal{U}_{p,r}(\xi), \mathcal{U}_{p,r}^{\gamma_*(\varepsilon),\Upsilon_1,\Lambda,\sigma}(\xi)\right) \leq \frac{\varepsilon}{2}
\end{eqnarray} is held for every $\xi \in E$ where the sets $\mathcal{U}_{p,r}(\xi)$ and  $\mathcal{U}_{p,r}^{\gamma_*(\varepsilon),\Upsilon_1,\Lambda,\sigma}(\xi)$ are defined by (\ref{eq2}) and (\ref{eqq3}) respectively.

Now, let us prove that
\begin{eqnarray}\label{eqq67}
\mathcal{F}_{p,r} \subset \mathcal{F}_{p,r}^{\gamma_*(\varepsilon), \Upsilon_1, \Lambda, \sigma, \Upsilon_2} + \frac{5\varepsilon}{6} \overline{B}_{k+n}(1)
\end{eqnarray}
where $\overline{B}_{k+n}(1)= \left\{z\in \mathbb{R}^{k+n}:\left\| z \right\| \leq 1\right\}.$

Choose an arbitrary $(\xi_*, z_*) \in \mathcal{F}_{p,r}.$ Then we have that $z_* \in \mathcal{U}_{p,r}(\xi_*).$ By virtue of (\ref{eqq33*}) we have that there exists $w_*\in \mathcal{U}_{p,r}^{\gamma_*(\varepsilon),\Upsilon_1, \Lambda, \sigma}(\xi_*)$ such that
\begin{eqnarray}\label{eq663}
\left\|z_*-w_*\right\| < \frac{3\varepsilon}{5}.
\end{eqnarray}

Since $\Upsilon_2 =\left\{E_1, E_2, \ldots, E_N\right\}$ is a finite $\Delta$ partition of $E,$ then by virtue of the Definition \ref{def11}  we have that there exists $i_*$ such that  $\xi_* \in E_{i_*}$ and $\left\|\xi_*-\xi_{i_*}\right\| \leq \Delta$. Similarly to the Proposition \ref{prop11} it is possible to show that
\begin{eqnarray}\label{eq665}
H_n \left(\mathcal{U}_{p,r}^{\gamma_*(\varepsilon), \Upsilon_1, \Lambda, \sigma}(\xi_*),\mathcal{U}_{p,r}^{\gamma_*(\varepsilon), \Upsilon_1, \Lambda, \sigma}(\xi_{i_*})\right) \leq \beta_* \cdot \varphi \left(\left\|\xi_*-\xi_{i_*}\right\|\right).
\end{eqnarray}

Since $\Delta \in \left(0,\Delta_*(\varepsilon)\right),$ then from (\ref{eqq15*}), (\ref{eqq15}) and (\ref{eq665}) we obtain that
for $w_*\in \mathcal{U}_{p,r}^{\gamma_*(\varepsilon),\Upsilon_1, \Lambda, \sigma}(\xi_*)$ there exists $f_*\in \mathcal{U}_{p,r}^{\gamma_*(\varepsilon),\Upsilon_1, \Lambda, \sigma}(\xi_{i_*})$ such that
\begin{eqnarray}\label{eq667}
\left\| w_*-f_*\right\| < \frac{\varepsilon}{9}.
\end{eqnarray}

It is obvious that $(\xi_{i_*},f_*)\in \mathcal{F}_{p,r}^{\gamma_*(\varepsilon),\Upsilon_1, \Lambda, \sigma, \Upsilon_2}.$
Since $\left\|\xi_*-\xi_{i_*}\right\| \leq \Delta <\Delta_*(\varepsilon)$, then (\ref{eqq15}), (\ref{eq663})  and (\ref{eq667}) imply
\begin{eqnarray}\label{eq668}
\left\|(\xi_*,z_*)-(\xi_{i_*},f_*)\right\| &\leq & \left\|\xi_*-\xi_{i_*}\right\| +\left\|z_*-f_*\right\|\nonumber \\ &\leq & \left\|\xi_*-\xi_{i_*}\right\| +\left\|z_*-w_*\right\|+\left\|w_*-f_*\right\|<\frac{5\varepsilon}{6}.
\end{eqnarray}
Thus we have proved that for an arbitrary chosen $(\xi_*, z_*) \in \mathcal{F}_{p,r}$ there exists $(\xi_{i_*},f_*)\in \mathcal{F}_{p,r}^{\gamma_*(\varepsilon),\Upsilon_1, \Lambda, \sigma, \Upsilon_2}$ such that the inequality (\ref{eq668}) is held. This  means that the inclusion (\ref{eqq67}) is verified.

Finally, from (\ref{eq66}) and (\ref{eqq67}) we have the validity of the inequality (\ref{os2}). The proof of the theorem is completed.
\end{proof}

From Theorem \ref{teo41} it follows that for each $\Delta \in \left(0,\Delta_*(\varepsilon)\right],$ $\delta \in \left(0,\delta_*(\varepsilon)\right]$ and $\sigma \in (0,\sigma_*(\varepsilon)]$ the inequality
\begin{eqnarray*}
H_{n}\left(\mathcal{U}_{p,r}(\xi), \mathcal{U}_{p,r}^{\gamma_*(\varepsilon),\Upsilon_1, \Lambda,\sigma}(\xi)\right) < \varepsilon
\end{eqnarray*}
is also satisfied for every $\xi \in E$ where the set $\mathcal{U}_{p,r}(\xi)$ is defined by (\ref{eq2}), the set $\mathcal{U}_{p,r}^{\gamma_*(\varepsilon), \Upsilon_1, \Lambda,\sigma}(\xi)$ is defined by (\ref{eqq3}), $\gamma_*(\varepsilon)$ is defined by (\ref{eq60}), $\Delta_*(\varepsilon)$ is defined by (\ref{eqq15}), $\delta_*(\varepsilon)$ is defined by (\ref{eqq24}), $\sigma_*(\varepsilon)$ is defined by (\ref{sigma*}),  $\Upsilon_1$ is a finite $\Delta$-partition of the compact set $\Omega,$ $\Lambda$ is a uniform partition of the closed interval  $[0,\gamma_*(\varepsilon)],$ $\delta$ is its diameter.

Note that since
\begin{eqnarray*}
\mathcal{U}_{p,r}^{\gamma_*(\varepsilon),\Upsilon_1, \Lambda,\sigma} \subset \mathcal{U}_{p,r} \, ,
\ \ \mathcal{F}_{p,r}^{\gamma_*(\varepsilon),\Upsilon_1, \Lambda,\sigma, \Upsilon_2} \subset \mathcal{F}_{p,r}
\end{eqnarray*} we conclude that the presented approximations are internal ones.

\end{document}